\documentclass[11pt,a4paper,reqno]{amsart}

\usepackage{amsfonts}
\usepackage{amsmath}
\usepackage{amssymb}
\usepackage{amsthm}
\usepackage{mathtools}
\usepackage[hidelinks]{hyperref}
\hypersetup{
	pdftitle={Flow decomposition and stochastic completeness of weighted graphs},
	pdfauthor={Qingsong Gu, Lu Hao, Xueping Huang, Yuhua Sun},
	pdfsubject={Stochastic completeness of weighted graphs via flow decomposition},
	pdfkeywords={flow decomposition, stochastic completeness, intrinsic metric, capacity to infinity, relative capacity}
}
\usepackage{microtype}

\newcommand{\Div}{\operatorname{div}}
\newcommand{\capacity}{\operatorname{Cap}}
\newcommand{\cE}{\mathcal E}
\newcommand{\cD}{\mathcal D}
\newcommand{\cT}{\mathcal T}
\newcommand{\tail}{\operatorname{tail}}
\newcommand{\head}{\operatorname{head}}
\newcommand{\1}{\mathbf 1}
\newcommand{\dd}{\mathrm d}
\newcommand{\E}{\mathbb E}
\newcommand{\Pp}{\mathbb P}

\numberwithin{equation}{section}
\newtheorem{theorem}{Theorem}[section]
\newtheorem{lemma}[theorem]{Lemma}
\newtheorem{proposition}[theorem]{Proposition}
\newtheorem{corollary}[theorem]{Corollary}
\newtheorem{remark}[theorem]{Remark}

\newif\ifprintstandardproofs
\printstandardproofsfalse

\newif\ifprintnashwilliams
\printnashwilliamsfalse

\allowdisplaybreaks

\begin{document}
	
	\author{Qingsong Gu}
	\address{School of Mathematics, Nanjing University, Nanjing 210093, P. R. China}
	\email{qingsonggu@nju.edu.cn}
	
	\author{Lu Hao}
	\address{Universit\"{a}t Bielefeld, Fakult\"{a}t f\"{u}r Mathematik, Postfach 100131, D-33501 Bielefeld, Germany}
	\email{lhao@math.uni-bielefeld.de}
	
	\author{Xueping Huang}
	\address{School of Mathematics and Statistics, Nanjing University of Information Science and Technology, Nanjing 210044, P. R. China}
	\email{hxp@nuist.edu.cn}
	
	\author{Yuhua Sun}
	\address{School of Mathematical Sciences and LPMC, Nankai University, Tianjin 300071, P. R. China}
	\email{sunyuhua@nankai.edu.cn}
	
	\title[Stochastic completeness]{Flow decomposition and stochastic completeness of weighted graphs}
	
	\thanks{Q. Gu was supported by the National Natural Science Foundation of China (Grant Nos. 12101303 and 12171354). L. Hao was funded by the Deutsche Forschungsgemeinschaft (DFG, German Research Foundation), Project-ID 317210226, SFB 1283. X. Huang was supported by the National Natural Science Foundation of China (Grant No. 11601238). Y. Sun was funded by the National Natural Science Foundation of China (Grant No. 12371206) and the Fundamental Research Funds for the Central Universities (No. 050-63263078).}
	
	\subjclass[2020]{Primary 60J27; Secondary 31C20, 05C21, 05C81}
	\keywords{flow decomposition, stochastic completeness, intrinsic metric, capacity to infinity, relative capacity}
	
	\begin{abstract}
		We give a new proof by flow decomposition of the known Grigor'yan type
		integral criterion for stochastic completeness of weighted graphs.  The main
		ingredient is a finite-domain estimate relating the mass of a killed
		$1$-resolvent and the current reaching the boundary to relative capacities
		of intrinsic balls.  This estimate yields logarithmic criteria in terms of
		capacity to infinity and relative capacity, and recovers the volume criterion
		by an intrinsic cutoff, without passing to a metric graph or refining the
		graph.
	\end{abstract}
	
	\maketitle
	\tableofcontents
	\newpage
	
	\section{Introduction}\label{sec:introduction}
	
	A weighted graph carries a minimal reversible continuous-time Markov chain.
	The process may make infinitely many jumps in finite time; stochastic
	completeness rules out this phenomenon.  Equivalently,
	\begin{equation*}
		P_t\1=\1,
		\qquad\text{or}\qquad
		\Pp_x(\zeta=\infty)=1\quad\text{for every }x\in V,
	\end{equation*}
	where $(P_t)_{t\ge0}$ is the heat semigroup and $\zeta$ is the lifetime of the
	minimal process.
	
	For Brownian motion on a complete Riemannian manifold, Grigor'yan proved that
	stochastic completeness follows from
	\begin{equation}\label{eq:grigoryan-manifold-intro}
		\int^\infty
		\frac{r\,\dd r}{\log \mu(B(o,r))}
		=\infty.
	\end{equation}
	This is a basic example of how large-scale volume growth controls the long-time
	behaviour of a diffusion \cite{Grigoryan1987,Grigoryan1999}.
	
	The graph analogue has been studied from several points of view.  Weber and
	Wojciechowski initiated the systematic study of stochastic completeness for
	weighted graphs \cite{Weber2010,Wojciechowski2011}.  Keller and Lenz placed
	the subject in a general Dirichlet-form framework \cite{KellerLenz2012}.
	Intrinsic metrics for nonlocal Dirichlet forms were developed in
	\cite{FrankLenzWingert2014}, while related adapted metrics were used in a
	uniqueness class for the discrete heat equation
	\cite{Huang2012Uniqueness}.
	
	The nonlocality of the graph Laplacian makes the sharp counterpart of
	\eqref{eq:grigoryan-manifold-intro} more delicate than in the manifold
	setting.  Folz first proved the sharp integral criterion for intrinsic 
	metrics by passing to an associated metric graph \cite{Folz2014}.  Huang
	gave alternative analytic proofs \cite{Huang2014Volume}.
	Huang, Keller, and Schmidt obtained the optimal formulation for intrinsic
	pseudometrics with finite balls by combining a heat-equation uniqueness class
	with graph refinements \cite{HuangKellerSchmidt2020}.  Further background can
	be found in \cite{Huang2025Survey,KellerLenzWojciechowski2021}.
	
	In this paper, we give a new proof by flow decomposition of the known
	Grigor'yan type integral criterion for stochastic completeness of weighted
	graphs.  The strongest capacitary result obtained here is a logarithmic
	criterion in terms of capacity to infinity.  It subsumes a
	relative-capacity criterion, from which the volume criterion follows by an
	intrinsic cutoff.  The entire argument is carried out on the original
	weighted graph; no metric graph or graph refinement is introduced.
	
	Flow decomposition and the basic relative-capacity estimate were used in our
	recent work on semilinear Lane--Emden inequalities
	\cite{GuHaoHuangSunLaneEmden2026}.  The linear problem considered here
	requires a different pathwise logarithmic estimate.  It also requires us to
	account for current absorbed by the killing term before it reaches the
	boundary.
	
	We now state the main results.  Let $(V,b,m)$ be an infinite, connected,
	locally finite weighted graph.  Thus $b(x,y)=b(y,x)\ge0$, $b(x,x)=0$, and
	$m(x)>0$.  We use the Laplacian
	\begin{equation*}
		\Delta f(x)
		=\frac1{m(x)}\sum_{y\in V}b(x,y)(f(y)-f(x)).
	\end{equation*}
	We write $x\sim y$ when $b(x,y)>0$.  An edge-length function $\rho$ assigns
	a positive length $\rho(x,y)=\rho(y,x)$ to every edge $x\sim y$.  We call
	$\rho$ $m$-adapted if
	\begin{equation*}
		\sum_{y\in V}b(x,y)\rho(x,y)^2\le m(x),
		\qquad x\in V.
	\end{equation*}
	The induced path metric $d_\rho$ is called an intrinsic path metric.  Fix
	$o\in V$ and write
	\begin{equation*}
		B_r:=B_\rho(o,r)=\{x\in V:d_\rho(o,x)\le r\}.
	\end{equation*}
	Throughout the paper, we assume that $d_\rho$ is complete; equivalently, all
	its closed balls are finite
	\cite[Theorem~11.16]{KellerLenzWojciechowski2021}.
	
	For a finite set $\Omega\subset V$ and $A\subset\Omega$, define the relative
	capacity
	\begin{equation*}
		\capacity_\Omega(A)
		:=\inf\bigl\{\cE(\varphi):
		\varphi=1\text{ on }A,\quad
		\varphi=0\text{ on }V\setminus\Omega\bigr\},
	\end{equation*}
	where
	\begin{equation*}
		\cE(\varphi)
		:=\frac12\sum_{x,y\in V}b(x,y)(\varphi(x)-\varphi(y))^2.
	\end{equation*}
	For a finite set $A\subset V$, its capacity to infinity is
	\begin{equation*}
		\capacity_V(A)
		:=\inf\{\cE(\varphi):
		\varphi\text{ is finitely supported and }\varphi\ge1\text{ on }A\}.
	\end{equation*}
	
	The capacity-to-infinity criterion is as follows.
	
	\begin{theorem}\label{thm:intro-capacity-infinity}
		Let $d_\rho$ be a complete intrinsic path metric associated with an
		$m$-adapted edge-length function $\rho$.  If, for some $o\in V$,
		\begin{equation}\label{eq:intro-capacity-infinity-condition}
			\int_1^\infty
			\frac{r\,\dd r}
			{\log\!\left(e+\capacity_V(B_r)/m(o)\right)}
			=\infty,
		\end{equation}
		then $(V,b,m)$ is stochastically complete.
	\end{theorem}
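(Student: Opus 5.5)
We argue through the resolvent characterization of stochastic completeness.
Recall that $(V,b,m)$ is stochastically complete if and only if the minimal $1$-resolvent satisfies $G_1\1=\1$.
Equivalently, the killed resolvents $u_R:=G_1^{B_R}\1$ satisfy $u_R(o)\uparrow1$ as $R\to\infty$; by connectedness it suffices to check this at the single vertex $o$.
Set $w_R:=1-u_R$.
Then $w_R$ solves $(1-\Delta)w_R=0$ in $B_R$ and $w_R=1$ outside $B_R$, and $w_R(o)$ is the probability that the process started at $o$ exits $B_R$ before an independent $\mathrm{Exp}(1)$ time.
By the flow picture we write
\begin{equation*}
	m(o)\,w_R(o)=\bigl(\text{current of }w_R\text{ into }o\bigr)-\bigl(\text{mass absorbed by the killing}\bigr),
\end{equation*}
and the task is to show that $w_R(o)\to0$ along a sequence $R\to\infty$.

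\textbf{Step 1: flow decomposition.}
Consider the unit-type flow $\theta(x,y)=b(x,y)(u(x)-u(y))$ generated by the killed resolvent $u=G^{\Omega}_1\delta_o/m(o)$ on a finite domain $\Omega=B_R$.
Its divergence equals $\delta_o$ minus the killing density $u\,m$.
We decompose $\theta$ into a convex combination of directed paths from $o$, each of which either terminates at the boundary $\partial B_R$ or is absorbed at an interior vertex.
The total weight $J_R$ of the paths reaching the boundary equals $w_R(o)$ by symmetry, so $J_R$ is exactly the current reaching the boundary.
The portion absorbed before reaching the boundary is $\sum_x u(x)m(x)$, which is the resolvent mass.

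\textbf{Step 2: pathwise logarithmic estimate.}
This is the main obstacle.
Fix radii $r_0<r_1<\dots<r_n=R$ and split each surviving path into the crossings of the annuli $B_{r_{k}}\setminus B_{r_{k-1}}$.
Crossing an annulus requires intrinsic length at least $r_k-r_{k-1}$.
Adaptedness of $\rho$, combined with Cauchy--Schwarz along the path, bounds the energy of $u$ dissipated on that crossing from below by a quantity of order $(r_k-r_{k-1})^2$ divided by the time spent, while the killing term charges roughly the time spent.
Optimizing the split between these two, with the parameter $1$ playing the role of time, should give a recursive inequality of the form
\begin{equation*}
	J_{r_k}\le J_{r_{k-1}}\exp\!\Bigl(-c\,\frac{(r_k-r_{k-1})^2}{\log\bigl(e+\capacity_{B_{r_k}}(B_{r_{k-1}})/m(o)\bigr)}\Bigr).
\end{equation*}
The proof should first try to obtain this inequality in a weaker multiplicative form, $J_{r_k}\le J_{r_{k-1}}\cdot\theta_k$ with $\theta_k<1$ controlled by a capacity ratio, by comparing the flow energy of the crossing with the relative capacity through the Thomson/Dirichlet principle.
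The logarithm would then come from choosing the annuli so that the capacities grow geometrically.

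\textbf{Step 3: summation and passage to capacity to infinity.}
Choose the scales $r_k=2^k$.
We have $\capacity_{B_{r_k}}(B_{r_{k-1}})\le\capacity_V(B_{r_{k-1}})$ up to a cutoff correction: take an intrinsic cutoff $\eta$ that equals $1$ on $B_{r_{k-1}}$ and is $1/(r_k-r_{k-1})$-Lipschitz; adaptedness bounds its energy by $\mu(B_{r_k})/(r_k-r_{k-1})^2$, and the equilibrium potential is truncated against it.
Iterating Step 2 gives
\begin{equation*}
	w_R(o)\le\exp\!\Bigl(-c\sum_k\frac{4^k}{\log(e+\capacity_V(B_{2^k})/m(o))}\Bigr).
\end{equation*}
Dyadic comparison shows that this sum diverges exactly when \eqref{eq:intro-capacity-infinity-condition} holds, since $r\mapsto\capacity_V(B_r)$ is monotone.
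Hence $w_R(o)\to0$ and $G_1\1(o)=1$, which proves stochastic completeness.
The delicate points are twofold: bounding the killing loss uniformly along paths, and handling the case where the capacity stays bounded; in that case the logarithm is comparable to $1$ and the argument becomes easier.
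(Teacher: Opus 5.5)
Your set-up (killed $1$-resolvent on $B_R$, its flow decomposition, boundary current equal to $\E_oe^{-\tau_{B_R}}$) is the same as the paper's. The core inequality of Step~2, however, is not merely unproved: it is false. Take the half-line $V=\{0,1,2,\dots\}$ with $b\equiv1$ on nearest-neighbour edges, $m\equiv1$, $\rho\equiv2^{-1/2}$ (adapted and complete) and $o=0$. It is recurrent, so $\capacity_V(B_r)=0$, and every annular relative capacity is at most $1$. For $R=2^n$ your final bound therefore gives $w_R(o)\le\exp\bigl(-c\sum_{k\le n}4^k\bigr)\le e^{-cR^2}$. But $B_R=\{0,\dots,N\}$ with $N=\lfloor\sqrt2R\rfloor$, and $q(j):=\E_je^{-\tau_{B_R}}$ solves $(1-\Delta)q=0$ on $B_R$ with $q(N+1)=1$. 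That is, $q(1)=2q(0)$ and $q(j+1)=3q(j)-q(j-1)$, so $q(j)=q(0)F_{2j+1}$ with $F$ the Fibonacci numbers, and $w_R(o)=q(0)=1/F_{2N+3}\ge e^{-CR}$. A $1$-resolvent decays exponentially with distance, not in a Gaussian way. Your heuristic freezes the time at $1$, whereas the resolvent averages over all times, and crossing a width $\Delta$ in time of order $\Delta$ costs only $e^{-c\Delta}$. What Cauchy--Schwarz along a crossing really gives is an additive budget. With $g=g_{1,B_R}$ and $H(\gamma)=\sum_i\ell_i^2g(x_i)/(g(x_i)-g(x_{i+1}))$, adaptedness gives $\E_\Pi H\le\sum_xg(x)m(x)=1-p$. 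A segment of length at least $\Delta$ running from potential $Z_{\mathrm{in}}$ down to $Z_{\mathrm{out}}$ contributes at least $\Delta^2/\log(Z_{\mathrm{in}}/Z_{\mathrm{out}})$. Thus the logarithm necessarily involves a potential ratio $g(o)/Z$, and $1/Z$ can only be controlled, on average, by a capacity divided by the boundary current $p$, not by $m(o)$ alone.

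Step~3 fails independently. Domain monotonicity gives $\capacity_{B_{r_k}}(B_{r_{k-1}})\ge\capacity_V(B_{r_{k-1}})$, the reverse of what you use; on a recurrent graph the right side is $0$ and the left side is positive. Your cutoff correction $m(B_{r_k})/(r_k-r_{k-1})^2$ is exactly the volume term, so keeping it yields at best Corollary~\ref{cor:intro-volume}. That is strictly weaker: the first example in Section~\ref{sec:half-line} satisfies the capacity condition but not the volume condition. Annular capacities alone yield at best Theorem~\ref{thm:intro-capacity}. The paper reaches $\capacity_V$ by never localizing to annuli. For each $r<R$ it selects, on every path that reaches the terminals of $D=B_R$, the segment from the tail of the first edge leaving $B_r$ all the way to the terminal. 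Thomson's principle for the resulting flow gives $\E_\Pi[Z_r^{-1}\mid\Gamma_\partial]\le\capacity_{B_R}(B_r)/p$. Combine this with the pathwise bound $H(\gamma)\ge c\int_0^Rr\,\dd r/\log(e+g(o)/Z_r)$ and Jensen's inequality for the convex function $t\mapsto1/\log(e+g(o)t)$. This gives $1-p_R\ge c\,p_R\int_1^Rr\,\dd r/\log\bigl(e+\tfrac{1-p_R}{p_R}\capacity_{B_R}(B_r)/m(o)\bigr)$: only a linear bound, with $1/p_R$ inside the logarithm. The conclusion is therefore by contradiction. If $p_R\downarrow p_\infty>0$, then $\capacity_{B_R}(B_r)\downarrow\capacity_V(B_r)$, and Fatou's lemma bounds $\int_1^Lr\,\dd r/\log\bigl(e+\tfrac{1-p_\infty}{p_\infty}\capacity_V(B_r)/m(o)\bigr)$ by $(1-p_\infty)/(c\,p_\infty)$ uniformly in $L$. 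This contradicts the hypothesis, hence $\E_oe^{-\zeta}=0$.
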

	
	\begin{remark}\label{rem:recurrence-capacity}
		For the weighted graph $(V,b,m)$, recurrence is equivalent to the
		vanishing of the effective conductance from any vertex to infinity
		\cite[Theorem~2.3, p.~27]{LyonsPeres2016}.  By the Dirichlet principle, the
		effective conductance from a finite set $A$ to infinity is $\capacity_V(A)$;
		see \cite[Exercise~2.93, p.~68]{LyonsPeres2016}.  Since
		\[
		\capacity_V(A)
		\le\sum_{x\in A}\capacity_V(\{x\}),
		\]
		recurrence is therefore equivalent to $\capacity_V(A)=0$ for every nonempty
		finite set $A$.  Thus, on a recurrent weighted graph, $\capacity_V(B_r)=0$ for every
		$r$, and
		\eqref{eq:intro-capacity-infinity-condition} holds automatically.
		Within the standing complete-intrinsic-metric setting,
		Theorem~\ref{thm:intro-capacity-infinity} therefore recovers the  fact that
		\[
		\text{recurrence}\quad\Longrightarrow\quad\text{stochastic completeness}.
		\]
		The implication itself is true in general, without the completeness assumption. 
		\ifprintstandardproofs
		Indeed, the embedded jump chain visits a fixed vertex $x$ infinitely often.
		At each visit, the holding time at $x$ is
		exponential with mean
		\[
		\frac{m(x)}{\sum_y b(x,y)}>0,
		\]
		and these holding times are independent by the strong Markov property.  Their
		sum is almost surely infinite, so explosion cannot occur.
		\fi
	\end{remark}
	
	Since
	\begin{equation}\label{eq:intro-global-local-capacity}
		\capacity_V(B_r)\le \capacity_{B_{2r}}(B_r),
	\end{equation}
	Theorem~\ref{thm:intro-capacity-infinity} subsumes the following
	relative-capacity criterion.
	
	\begin{theorem}\label{thm:intro-capacity}
		Let $d_\rho$ be a complete intrinsic path metric associated with an
		$m$-adapted edge-length function $\rho$.  If, for some $o\in V$,
		\begin{equation}\label{eq:intro-capacity-condition}
			\int_1^\infty
			\frac{r\,\dd r}
			{\log\!\left(e+\capacity_{B_{2r}}(B_r)/m(o)\right)}
			=\infty,
		\end{equation}
		then $(V,b,m)$ is stochastically complete.
	\end{theorem}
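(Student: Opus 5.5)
The plan is to deduce Theorem~\ref{thm:intro-capacity} directly from Theorem~\ref{thm:intro-capacity-infinity} by comparing the two integrands pointwise in $r$, using the inequality \eqref{eq:intro-global-local-capacity}.

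First we verify \eqref{eq:intro-global-local-capacity}. Since $d_\rho$ is complete, $B_{2r}$ is finite. Any $\varphi$ admissible for $\capacity_{B_{2r}}(B_r)$ vanishes off $B_{2r}$, so it is finitely supported. It also equals $1$ on $B_r$, so $\varphi\ge 1$ there. Hence $\varphi$ is admissible for $\capacity_V(B_r)$. Taking the infimum over such $\varphi$ gives $\capacity_V(B_r)\le\capacity_{B_{2r}}(B_r)$ for every $r\ge1$. Here $B_r\subset B_{2r}$, so the relative capacity is well defined, and the admissible class is nonempty, for instance $\1_{B_r}$.

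Next, since $t\mapsto\log(e+t/m(o))$ is increasing and at least $1$ on $[0,\infty)$, we obtain for all $r\ge1$
\[
\frac{r}{\log\!\left(e+\capacity_{B_{2r}}(B_r)/m(o)\right)}
\le
\frac{r}{\log\!\left(e+\capacity_V(B_r)/m(o)\right)} .
\]
Both functions are nonnegative. The integrands are measurable in $r$, because $r\mapsto B_r$ is monotone and piecewise constant, so the capacities are monotone step-type functions. Thus divergence of the left integral \eqref{eq:intro-capacity-condition} forces divergence of \eqref{eq:intro-capacity-infinity-condition}. Theorem~\ref{thm:intro-capacity-infinity}, applied with the same $\rho$ and $o$, then yields stochastic completeness.

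There is essentially no obstacle in this step. The only points needing care are the finiteness of $B_{2r}$, which comes from completeness and makes relative-capacity competitors genuinely finitely supported, and the monotonicity of the logarithm. All substantive work lies in Theorem~\ref{thm:intro-capacity-infinity}, which is established later via the flow-decomposition estimate.
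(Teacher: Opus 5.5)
Your proposal is correct and matches the paper's proof: the paper also derives Theorem~\ref{thm:intro-capacity} from Theorem~\ref{thm:intro-capacity-infinity}, using only the pointwise comparison \eqref{eq:intro-global-local-capacity} and the monotonicity of the logarithm. One small point: $\capacity_{B_{2r}}(B_r)$ need not be monotone in $r$, but since balls are finite the pair $(B_r,B_{2r})$ is piecewise constant with finitely many jumps on bounded intervals, and that is enough for measurability.
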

	
	The standard intrinsic cutoff gives
	\begin{equation}\label{eq:intro-cutoff}
		\capacity_{B_{2r}}(B_r)
		\le \frac{m(B_{2r})}{r^2}.
	\end{equation}
	Consequently, Theorem~\ref{thm:intro-capacity} yields the following known
	Grigor'yan type integral criterion.  Folz first proved this graph result under
	intrinsic path-metric assumptions \cite{Folz2014}; see
	\cite{Huang2014Volume,HuangKellerSchmidt2020} for the later formulations and
	alternative proofs described above.
	
	\begin{corollary}\label{cor:intro-volume}
		Under the same assumptions as Theorem~\ref{thm:intro-capacity}, suppose that,
		for some $o\in V$,
		\begin{equation}\label{eq:intro-volume-condition}
			\int_1^\infty
			\frac{r\,\dd r}
			{\log\!\left(e+m(B_r)/m(o)\right)}
			=\infty,
		\end{equation}
		then $(V,b,m)$ is stochastically complete.
	\end{corollary}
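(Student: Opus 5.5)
The plan is to reduce Corollary~\ref{cor:intro-volume} to Theorem~\ref{thm:intro-capacity} through the cutoff estimate \eqref{eq:intro-cutoff}. The only real work is to compare the two integrands, since the capacity bound involves $m(B_{2r})/r^2$ rather than $m(B_r)$. I will first record the cutoff argument. The function
\[
\varphi(x)=\min\{1,\ (2-d_\rho(o,x)/r)_+\}
\]
equals $1$ on $B_r$ and vanishes outside $B_{2r}$. It is $1/r$-Lipschitz for $d_\rho$, so for each edge $x\sim y$ we have $|\varphi(x)-\varphi(y)|\le\rho(x,y)/r$, because $d_\rho(x,y)\le\rho(x,y)$. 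Only edges with at least one endpoint in $B_{2r}$ contribute to $\cE(\varphi)$. Summing over those endpoints and using $m$-adaptedness gives
\[
\cE(\varphi)\le \frac1{r^2}\sum_{x\in B_{2r}}\sum_y b(x,y)\rho(x,y)^2\le m(B_{2r})/r^2 .
\]
Strictly, the one-sided sum carries a factor that must be checked: each edge is counted at most twice against the $\tfrac12$ in $\cE$. That is fine, and a harmless constant would not matter anyway.

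The main step is to show that \eqref{eq:intro-volume-condition} implies \eqref{eq:intro-capacity-condition}. For $r\ge1$ we have
\[
\log\bigl(e+\capacity_{B_{2r}}(B_r)/m(o)\bigr)\le \log\bigl(e+m(B_{2r})/m(o)\bigr),
\]
since $r^{-2}\le1$. Substituting $s=2r$ then gives
\[
\int_1^\infty\frac{r\,\dd r}{\log(e+m(B_{2r})/m(o))}=\frac14\int_2^\infty\frac{s\,\dd s}{\log(e+m(B_s)/m(o))}.
\]
This is infinite by \eqref{eq:intro-volume-condition}, because the piece over $[1,2]$ is finite: its integrand is bounded by $s$, as the logarithm is at least $1$. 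Measurability is not an issue, since $r\mapsto m(B_r)$ is monotone.

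Finally, I apply Theorem~\ref{thm:intro-capacity} under the same standing hypotheses (complete intrinsic path metric, $m$-adapted $\rho$) and conclude that the graph is stochastically complete. I expect no genuine obstacle. The only points needing care are the edge-counting constant in the cutoff estimate and the fact that the cutoff function is admissible for $\capacity_{B_{2r}}(B_r)$. Admissibility holds because $\varphi$ vanishes off the finite ball $B_{2r}$, and completeness ensures that ball is finite.
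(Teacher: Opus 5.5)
Your proposal is correct and matches the paper's proof: the radial cutoff (Lemma~\ref{lem:intrinsic-cutoff}) gives $\capacity_{B_{2r}}(B_r)\le m(B_{2r})/r^2$, then $r^{-2}\le1$ for $r\ge1$ compares the logarithms, and the substitution $s=2r$ carries the divergence over to \eqref{eq:intro-capacity-condition}, after which Theorem~\ref{thm:intro-capacity} applies. Your edge-counting check is also right: the two one-sided sums exactly absorb the factor $\tfrac12$ in $\cE$, so the constant in the cutoff bound is exactly $1$.
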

	
	The proof begins with the killed $1$-resolvent on a finite domain.  Its
	gradient defines a current with one unit of source at the pole; part of this
	current is absorbed by the killing term, and the remainder reaches the
	boundary.  Flow decomposition and the pathwise logarithmic estimate lead to
	the central finite-domain estimate in
	Lemma~\ref{lem:finite-domain-capacity}.  Applying this lemma to intrinsic balls
	and passing to an exhaustion proves
	Theorem~\ref{thm:intro-capacity-infinity}.  The relative-capacity criterion
	follows from \eqref{eq:intro-global-local-capacity}, and the intrinsic cutoff
	\eqref{eq:intro-cutoff} gives Corollary~\ref{cor:intro-volume}.
	
	Relative capacity also appears in other criteria for stochastic completeness,
	but in different roles.  Andres and Barlow combine cutoff Sobolev inequalities
	with volumes of annuli \cite{AndresBarlow2015}.  Grigor'yan and Masamune use
	relative capacity in connection with the Cauchy boundary and Green's formula
	\cite{GrigoryanMasamune2013}.  We have not found in the literature the
	logarithmic capacity criteria stated above or the finite-domain estimates
	proved in Section~\ref{sec:flow-capacity}.  The capacity-to-infinity form is
	obtained by exhausting the finite-domain estimate.
	
	For weighted half-lines, stochastic completeness is characterized by the
	divergence of the exact birth--death sum
	\begin{equation*}
		\sum_{n=0}^\infty\frac{M_n}{b_n},
	\end{equation*}
	where $M_n$ is the mass of $\{0,\ldots,n\}$ and $b_n=b(n,n+1)$; see
	\cite[Theorem~9.25, pp.~406--408]{KellerLenzWojciechowski2021}.  We recover
	the corresponding finite identity directly from the killed $1$-resolvent.
	Two weighted-half-line examples then clarify the relation between the general
	sufficient conditions.
	The first shows that the relative-capacity condition can hold while the volume
	condition fails.  In fact, the capacity to infinity vanishes in that example,
	so the capacity-to-infinity condition holds as well.  The second shows that all
	three general criteria can still lose a logarithmic order relative to the exact
	birth--death test.
	
	The paper is organized as follows.  Section~\ref{sec:preliminaries} recalls
	weighted graphs, intrinsic path metrics, finite killed resolvents, flows, and
	capacities.  In Section~\ref{sec:flow-capacity} we prove the
	finite-domain capacity estimate by flow decomposition and derive its
	relative-capacity form.  Section~\ref{sec:killed} applies the estimate to
	killed $1$-resolvents and proves the capacity-to-infinity, relative-capacity,
	and volume criteria.  Section~\ref{sec:half-line} contains the exact
	weighted-half-line test and two examples.
	\ifprintnashwilliams
	A final subsection of Section~\ref{sec:killed} records a Nash--Williams
	consequence of the capacity-to-infinity criterion.
	\fi
	
	\section{Preliminaries}\label{sec:preliminaries}
	
	In this section, we fix the conventions used below and recall the standard
	finite-network facts needed later.  For general background on flows,
	resolvents, and weighted graphs, see
	\cite{FordFulkerson1962,LyonsPeres2016,KellerLenzWojciechowski2021}.
	No originality is claimed in this section.
	
	\subsection{Weighted graphs and the minimal process}
	
	Let $(V,b,m)$ be as in the introduction.  We write $x\sim y$ when
	$b(x,y)>0$.  Local finiteness means that every vertex has finitely many
	neighbors.  The jump rate from $x$ to $y$ is $b(x,y)/m(x)$, and the total
	jump rate at $x$ is finite.  The resulting minimal process is denoted by
	$(X_t)_{t\ge0}$, its lifetime by $\zeta$, and its heat semigroup by $P_t$.
	We send the process to a cemetery state after $\zeta$.  Then
	\begin{equation*}
		P_t\1(x)=\Pp_x(t<\zeta).
	\end{equation*}
	For the construction of the jump process and this identity, see
	\cite[Section~2.5 and Theorem~7.32]{KellerLenzWojciechowski2021}.
	
	For a finite set $D\subset V$, let
	\begin{equation*}
		\tau_D:=\inf\{t\ge0:X_t\notin D\}.
	\end{equation*}
	The killed semigroup is
	\begin{equation*}
		P_t^D f(x)=\E_x\bigl[f(X_t)\1_{\{t<\tau_D\}}\bigr],
		\qquad x\in D.
	\end{equation*}
	
	By the discrete Hopf--Rinow theorem, a path metric induced by positive edge
	lengths on a connected locally finite graph is complete if and only if every
	closed ball is finite; see
	\cite[Theorem~11.16]{KellerLenzWojciechowski2021}.  We also use the
	following elementary consequence.
	
	\begin{lemma}\label{lem:balls-connected}
		Let $d$ be a complete path metric induced by positive edge lengths on a
		connected locally finite graph.  Then every closed $d$-ball is connected in
		the underlying graph.
	\end{lemma}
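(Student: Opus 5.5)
The plan is to join every point of a closed ball to its center by a path that never leaves the ball. Let $B=\{x:d(c,x)\le r\}$ be a closed $d$-ball with center $c$ and radius $r\ge0$. It suffices to show that every $x\in B$ is joined to $c$ by a path of edges whose vertices all lie in $B$. The argument is a geodesic-existence step followed by an elementary distance estimate along the geodesic.

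\textbf{Step 1: a geodesic exists.} Completeness, via the discrete Hopf--Rinow theorem \cite[Theorem~11.16]{KellerLenzWojciechowski2021}, gives more than finiteness of balls. For a connected locally finite graph with a complete path metric, every pair of vertices is joined by a path whose length equals their distance. If we prefer not to cite this, it can be proved directly. Since $d$ is a path metric, $d(c,x)$ is the infimum of lengths of edge paths from $c$ to $x$. Any such path of length at most $d(c,x)+1$ stays inside the finite ball $B(c,d(c,x)+1)$. Restricted to this finite vertex set, we may assume the paths are simple, because deleting cycles does not increase length. There are only finitely many simple paths in a finite graph, so the infimum is a minimum. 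This finiteness is exactly where completeness enters, and I expect it to be the only point needing care.

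\textbf{Step 2: the geodesic stays in the ball.} Let $c=x_0\sim x_1\sim\dots\sim x_n=x$ be a path with
\[
\sum_{i=1}^n\rho(x_{i-1},x_i)=d(c,x).
\]
For each $k$, the initial segment from $x_0$ to $x_k$ is itself a path, so
\[
d(c,x_k)\le\sum_{i=1}^k\rho(x_{i-1},x_i)\le\sum_{i=1}^n\rho(x_{i-1},x_i)=d(c,x)\le r.
\]
Hence every $x_k$ lies in $B$, and $x$ is connected to $c$ within $B$.

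Since every point of $B$ is joined to $c$ inside $B$, any two points of $B$ are joined inside $B$ through $c$. Therefore $B$ is connected in the underlying graph. Local finiteness and positivity of the edge lengths are used only in Step 1, to make the relevant set of paths finite.
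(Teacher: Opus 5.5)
Your proof is correct and follows essentially the same argument as the paper. Discrete Hopf--Rinow makes a slightly larger ball finite, and loop erasure then turns the infimum over paths into a minimum over finitely many simple paths, which is a geodesic. The prefix estimate $d(c,x_k)\le d(c,x)\le r$ keeps that geodesic inside the ball; the only cosmetic difference is that you use the ball of radius $d(c,x)+1$ where the paper uses radius $R+1$.
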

	
	\begin{proof}
		Fix $x\in B_d(o,R)$.  By completeness and the discrete Hopf--Rinow
		theorem, $B_d(o,R+1)$ is finite.  Choose a path from $o$ to $x$ of length
		less than $d(o,x)+1$, and erase its loops.  The resulting simple path has
		length less than $R+1$, and every vertex on it belongs to $B_d(o,R+1)$.
		
		There are only finitely many simple paths from $o$ to $x$ contained in
		$B_d(o,R+1)$.  Choose one of minimal length and call it $\gamma$.  Then
		$\gamma$ is minimizing among all paths from $o$ to $x$.  Indeed, any shorter
		path could be made simple by erasing loops; since its total length would be
		less than $R+1$, it would also lie in $B_d(o,R+1)$, contradicting the choice
		of $\gamma$.  Thus $\gamma$ has length $d(o,x)$.
		
		If $z$ is a vertex of $\gamma$, then
		\[
		d(o,z)
		\le \operatorname{length}(\gamma|_{o\to z})
		\le \operatorname{length}(\gamma)
		=d(o,x)
		\le R.
		\]
		Hence $\gamma\subset B_d(o,R)$, and the ball is connected.
	\end{proof}
	
	\subsection{Energy and capacities on the ambient graph}
	
	For a finitely supported function $f:V\to\mathbb R$, define
	\begin{equation*}
		\cE(f)
		:=\frac12\sum_{x,y\in V}b(x,y)(f(x)-f(y))^2.
	\end{equation*}
	If $\Omega\subset V$ is finite and $A\subset\Omega$, the capacity of $A$
	relative to $\Omega$ is
	\begin{equation}\label{eq:relative-capacity-domain}
		\capacity_\Omega(A)
		:=\inf\{\cE(f):f=1\text{ on }A,\ f=0\text{ on }V\setminus\Omega\}.
	\end{equation}
	Every admissible function $f$ is supported in $\Omega$, so only the edges having
	at least one endpoint in $\Omega$ contribute to its energy.
	Moreover, relative capacity is decreasing in the outer domain: if
	$A\subseteq\Omega_1\subseteq\Omega_2$, then
	\begin{equation*}
		\capacity_{\Omega_2}(A)\le \capacity_{\Omega_1}(A).
	\end{equation*}
	
	For a finite set $A\subset V$, define its capacity to infinity by
	\begin{equation}\label{eq:capacity-infinity-definition}
		\capacity_V(A)
		:=\inf\{\cE(f):
		f\text{ is finitely supported and }f\ge1\text{ on }A\}.
	\end{equation}
	Truncating $f$ to $(0\vee f)\wedge1$ does not increase its energy.  Thus one
	may equivalently require $f=1$ on $A$ and $0\le f\le1$.  In electrical
	terminology, $\capacity_V(A)$ is the effective conductance from $A$ to
	infinity; it may be zero.
	
	\begin{lemma}\label{lem:capacity-exhaustion}
		Let $A\subset V$ be finite, and let $(\Omega_j)$ be an increasing exhaustion
		of $V$ by finite sets containing $A$.  Then
		\begin{equation}\label{eq:capacity-exhaustion}
			\capacity_{\Omega_j}(A)\downarrow\capacity_V(A).
		\end{equation}
		In particular, for fixed $r>0$,
		\begin{equation*}
			\capacity_{B_R}(B_r)\downarrow\capacity_V(B_r)
			\qquad\text{as }R\to\infty.
		\end{equation*}
	\end{lemma}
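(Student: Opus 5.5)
The plan has three parts: monotonicity, a lower bound by $\capacity_V(A)$, and an approximation argument for the reverse inequality.

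\emph{Monotonicity.} Since $A\subseteq\Omega_j\subseteq\Omega_{j+1}$, the monotonicity of relative capacity in the outer domain recorded above gives $\capacity_{\Omega_{j+1}}(A)\le\capacity_{\Omega_j}(A)$. So the sequence is nonincreasing and has a limit $L\ge0$.

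\emph{Lower bound.} Every function admissible for $\capacity_{\Omega_j}(A)$ equals $1$ on $A$ and vanishes off the finite set $\Omega_j$. It is therefore finitely supported and at least $1$ on $A$, hence admissible for $\capacity_V(A)$. This gives $\capacity_V(A)\le\capacity_{\Omega_j}(A)$ for every $j$, and so $\capacity_V(A)\le L$.

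\emph{Reverse inequality.} Fix $\varepsilon>0$. By the definition \eqref{eq:capacity-infinity-definition} and the truncation remark, choose a finitely supported $f$ with $f=1$ on $A$, $0\le f\le1$, and $\cE(f)\le\capacity_V(A)+\varepsilon$. The support of $f$ is finite, and the $\Omega_j$ increase and exhaust $V$, so there is $j_0$ with $\operatorname{supp}f\subseteq\Omega_j$ for all $j\ge j_0$. For such $j$ the function $f$ is admissible for $\capacity_{\Omega_j}(A)$, since $f=1$ on $A$ and $f=0$ on $V\setminus\Omega_j$. Hence $\capacity_{\Omega_j}(A)\le\capacity_V(A)+\varepsilon$ for $j\ge j_0$, which gives $L\le\capacity_V(A)+\varepsilon$. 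Letting $\varepsilon\to0$ proves \eqref{eq:capacity-exhaustion}.

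The only point needing care is that a finite support lies in some $\Omega_j$. This follows because each vertex lies in some $\Omega_j$ and the sets are nested; take the maximum index over the finitely many vertices of the support.

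\emph{The ball statement.} Fix $r>0$ and apply the general statement along any sequence $R_k\uparrow\infty$. By completeness, each $B_{R_k}$ is finite, by the discrete Hopf--Rinow theorem. The balls are increasing and contain $B_r$ once $R_k\ge r$. They exhaust $V$ because $d_\rho(o,x)<\infty$ for every $x$, by connectedness. Since $R\mapsto\capacity_{B_R}(B_r)$ is monotone in $R$, convergence along sequences yields $\capacity_{B_R}(B_r)\downarrow\capacity_V(B_r)$ as $R\to\infty$.
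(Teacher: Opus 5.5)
Your proposal is correct and follows the paper's proof essentially step for step: domain monotonicity for the decrease, admissibility of relative competitors for the lower bound, and a near-optimal finitely supported competitor whose support eventually lies in $\Omega_j$ for the reverse inequality. The ball case is likewise handled as in the paper, using finiteness of balls under completeness and the fact that they exhaust $V$; your passage from sequences $R_k\uparrow\infty$ to the continuous limit via monotonicity in $R$ is a harmless extra detail.
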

	
	\begin{proof}
		Domain monotonicity shows that the relative capacities decrease, and every
		relative admissible function is admissible in
		\eqref{eq:capacity-infinity-definition}.  Hence their limit is at least
		$\capacity_V(A)$.  Conversely, for $\varepsilon>0$, choose a finitely
		supported $f$, equal to one on $A$, such that
		\[
		\cE(f)\le\capacity_V(A)+\varepsilon.
		\]
		For all sufficiently large $j$, the support of $f$ lies in $\Omega_j$, so
		\[
		\capacity_{\Omega_j}(A)
		\le\cE(f)
		\le\capacity_V(A)+\varepsilon.
		\]
		Letting $j\to\infty$ and then $\varepsilon\downarrow0$ proves the claim.
		The assertion for balls follows because completeness makes them finite and
		they exhaust $V$.
	\end{proof}
	
	\subsection{Finite networks and flows}
	
	Let $(W,c)$ be a finite connected weighted network: $W$ is a finite vertex
	set and $c:W\times W\to[0,\infty)$ is symmetric, with $c(x,x)=0$.  We write
	$x\sim_W y$ when $c(x,y)>0$, and denote the oriented edge set by
	\begin{equation*}
		\overrightarrow E_W:=\{(x,y)\in W\times W:c(x,y)>0\}.
	\end{equation*}
	For $f:W\to\mathbb R$, set
	\begin{equation*}
		\cE_W(f)
		:=\frac12\sum_{x,y\in W}c(x,y)(f(x)-f(y))^2.
	\end{equation*}
	If $A,K\subset W$ are disjoint and nonempty, define
	\begin{equation*}
		\capacity_W(A,K)
		:=\inf\{\cE_W(f):f=1\text{ on }A,\ f=0\text{ on }K\}.
	\end{equation*}
	
	An antisymmetric edge function is a function
	$I:\overrightarrow E_W\to\mathbb R$ satisfying
	$I(y,x)=-I(x,y)$.  Its divergence is
	\begin{equation*}
		\Div I(x):=\sum_{y\sim_W x}I(x,y).
	\end{equation*}
	For disjoint nonempty sets $A,K\subset W$, such an edge function is called a
	flow from $A$ to $K$ if
	\begin{equation*}
		\Div I=0\quad\text{on }W\setminus(A\cup K),
		\qquad
		\Div I\ge0\quad\text{on }A,
		\qquad
		\Div I\le0\quad\text{on }K.
	\end{equation*}
	By antisymmetry,
	\begin{equation*}
		\sum_{x\in W}\Div I(x)=0.
	\end{equation*}
	Since $\Div I$ vanishes outside $A\cup K$, it follows that
	\begin{equation*}
		\sum_{x\in A}\Div I(x)
		=
		-\sum_{x\in K}\Div I(x)
		\ge0.
	\end{equation*}
	We call this common value the strength of $I$ and denote it by
	\begin{equation*}
		|I|:=\sum_{x\in A}\Div I(x).
	\end{equation*}
	Its energy is
	\begin{equation*}
		\cD_W(I):=\frac12\sum_{(x,y)\in\overrightarrow E_W}
		\frac{I(x,y)^2}{c(x,y)}.
	\end{equation*}
	We omit the subscript when the finite network is clear.  Thomson's principle
	gives
	\begin{equation}\label{eq:thomson-finite-network}
		\cD_W(I)\ge \frac{|I|^2}{\capacity_W(A,K)}.
	\end{equation}
	Indeed, if $h$ is the equilibrium potential, then summation by parts and the
	Cauchy--Schwarz inequality yield
	\begin{align*}
		|I|
		&=\sum_{x\in W}h(x)\Div I(x)\\
		&=\frac12\sum_{(x,y)\in\overrightarrow E_W}
		I(x,y)(h(x)-h(y))\\
		&\le \cD_W(I)^{1/2}\cE_W(h)^{1/2}.
	\end{align*}
	
	The ambient and finite-network capacities are related by an
	energy-preserving identification.  Given a finite connected set
	$\Omega\subset V$, retain every edge with both endpoints in $\Omega$.  For
	each boundary edge $\{x,y\}$, with $x\in\Omega$ and $y\notin\Omega$, replace
	$y$ by a separate terminal vertex $t_{\{x,y\}}$, keeping the same edge
	weight.  Denote the resulting network by $\widehat\Omega$ and its terminal
	set by $\cT_\Omega$.
	
	Extension by zero identifies the admissible functions for
	$\capacity_\Omega(A)$ with those for
	$\capacity_{\widehat\Omega}(A,\cT_\Omega)$ and preserves their energies:
	internal edges are unchanged, while each boundary edge is replaced by a
	copied edge with the same weight and a zero-valued terminal endpoint.
	Therefore
	\begin{equation}\label{eq:ambient-terminal-capacity}
		\capacity_\Omega(A)
		=
		\capacity_{\widehat\Omega}(A,\cT_\Omega).
	\end{equation}
	Below, $\capacity_\Omega(A)$ denotes ambient relative capacity, whereas
	$\capacity_{\widehat D}(A,K)$ denotes capacity between disjoint subsets of a
	terminal-copy network.
	
	\subsection{The finite killed \texorpdfstring{$1$}{1}-resolvent}
	
	Let $D\subset V$ be finite and connected.  Functions on $D$ are always
	extended by zero to $V\setminus D$ when a Dirichlet equation is written.  We
	use the network operator
	\begin{equation*}
		Lf(x)
		:=-m(x)\Delta f(x)
		=\sum_{y\in V}b(x,y)(f(x)-f(y)).
	\end{equation*}
	We also use the killed $1$-resolvent density with pole $o\in D$, denoted by
	\begin{equation*}
		g_{1,D}=g_{1,D}(o,\cdot).
	\end{equation*}
	It is characterized by
	\begin{equation}\label{eq:finite-one-resolvent}
		(-\Delta+1)g_{1,D}=\frac{\1_{\{o\}}}{m(o)}\quad\text{in }D,
		\qquad g_{1,D}=0\quad\text{on }V\setminus D.
	\end{equation}
	The subscript $1$ records the resolvent parameter.
	Equivalently, after multiplication by $m(x)$,
	\begin{equation}\label{eq:network-one-resolvent}
		Lg_{1,D}(x)+m(x)g_{1,D}(x)=\1_{\{o\}}(x),
		\qquad x\in D.
	\end{equation}
	By connectedness of $D$ and the strong minimum principle, $g_{1,D}$ is
	strictly positive on $D$.
	
	\section{Flow decomposition and relative capacity estimates}
	\label{sec:flow-capacity}
	
	The central result of this section is
	Lemma~\ref{lem:finite-domain-capacity}, the finite-domain estimate used in the
	remainder of the paper.  Its proof combines flow decomposition, a pathwise
	logarithmic estimate, and a local capacity bound for suitable path segments.
	Closely related forms of the flow decomposition and the local capacity bound
	appeared in \cite{GuHaoHuangSunLaneEmden2026}.  The new points here are the
	logarithmic estimate and the treatment of current absorbed inside the domain
	by the killing term.
	
	Throughout this section, $\rho$ is the $m$-adapted edge-length function fixed
	above, $d_\rho$ is its intrinsic path metric, and
	\[
	B_r=B_\rho(o,r).
	\]
	
	\subsection{The finite current and its flow decomposition}
	
	Let $D\subset V$ be finite and connected, with $o\in D$, and let
	$\widehat D=D\cup\cT_D$ be its terminal-copy network.  Every retained or
	copied edge $e$ inherits a weight $b_e$ and a length $\rho(e)$ from the
	corresponding edge of the original graph.
	Define the radial label by
	\begin{equation*}
		\mathfrak r(z):=
		\begin{cases}
			d_\rho(o,z),&z\in D,\\
			d_\rho(o,y),&z=t_e,
		\end{cases}
	\end{equation*}
	where $y$ is the exterior endpoint represented by $t_e$.  The terminal copies
	are only a finite bookkeeping device; no edge is subdivided and the ambient
	graph, metric, and Markov process are unchanged.
	
	Let $w:\widehat D\to[0,\infty)$ vanish on $\cT_D$, and define the
	antisymmetric gradient current by
	\begin{equation*}
		i_w(x,y):=b_e(w(x)-w(y))
	\end{equation*}
	on every retained or copied edge $e=\{x,y\}$.  Assume that the function
	\begin{equation}\label{eq:current-with-loss}
		\lambda:=\1_{\{o\}}-\Div i_w
	\end{equation}
	is nonnegative on $\widehat D$.  Summing over $\widehat D$ gives
	$\sum_{z\in\widehat D}\lambda(z)=1$.  Thus one unit of current is injected
	at $o$ and absorbed according to $\lambda$.  The value $\lambda(o)$ is
	allowed to be positive.
	
	Orient every edge with nonzero voltage drop from the larger value of $w$ to
	the smaller one, and discard the zero-current edges.  For an edge
	$e=(\tail(e),\head(e))$ in this directed support, set
	\begin{equation*}
		\theta_e
		:=i_w(\tail(e),\head(e))
		=b_e\bigl(w(\tail(e))-w(\head(e))\bigr)>0.
	\end{equation*}
	All sums over the directed support below count each underlying edge once in
	this orientation.  The directed support is acyclic because $w$ decreases
	strictly along every directed edge.  The following finite flow decomposition
	is standard and is also used in \cite{GuHaoHuangSunLaneEmden2026} in a
	slightly different form.
	
	\begin{lemma}\label{lem:flow-decomposition}
		There is a probability measure $\Pi$ on finite directed paths in $\widehat D$
		beginning at $o$ such that
		\begin{align}
			\Pi\{\gamma:\gamma\text{ uses }e\}&=\theta_e,
			\label{eq:flow-edge-marginal}\\
			\Pi\{\gamma:\gamma\text{ ends at }z\}&=\lambda(z).
			\label{eq:flow-terminal-marginal}
		\end{align}
		The path $(o)$ with no edge carries the mass $\lambda(o)$.
	\end{lemma}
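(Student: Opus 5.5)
The plan is to construct $\Pi$ explicitly as the law of a Markov chain on the directed support, a "random walk along the current". Conservation at each vertex reads
\[
\1_{\{o\}}(z)+\sum_{e:\head(e)=z}\theta_e
=\lambda(z)+\sum_{e:\tail(e)=z}\theta_e .
\]
This follows from $\lambda=\1_{\{o\}}-\Div i_w$. The divergence splits into outgoing edges (positive contributions $\theta_e$) and incoming edges (negative contributions $-\theta_e$), and zero-current edges contribute nothing. Define the throughput
\[
F(z):=\1_{\{o\}}(z)+\sum_{\head(e)=z}\theta_e,
\]
which is at least $\lambda(z)\ge0$. At a vertex $z$ with $F(z)>0$, the walk stops with probability $\lambda(z)/F(z)$ and otherwise moves along an outgoing edge $e$ with probability $\theta_e/F(z)$. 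These probabilities sum to one by the conservation identity and are nonnegative because $\lambda\ge0$. The walk starts at $o$ with $F(o)\ge1$.

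Since the directed support is acyclic and $\widehat D$ is finite, every directed path has length at most $|\widehat D|-1$. Hence the walk terminates almost surely after finitely many steps, and its path law $\Pi$ is a probability measure on finite directed paths from $o$. It never reaches a vertex with $F=0$, since it only arrives at $z$ through an edge with $\theta_e>0$. Terminal copies $t_e$ have no outgoing edges: $w$ vanishes there and $w\ge0$, so their only neighbor cannot have smaller $w$. At such a vertex $\lambda(t)=F(t)$, so the walk stops there automatically.

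To verify the marginals, let $N(z)$ be the expected number of visits to $z$. By acyclicity this is at most one, so it equals $\Pi\{\gamma\text{ visits }z\}$. I will show $N(z)=F(z)$ by induction along a topological order of the acyclic support; alternatively, order by decreasing $w$, since every edge strictly decreases $w$. At $o$ there are no incoming edges: an incoming edge would have $w(\tail)>w(o)$, and it could not come from $o$ itself. Thus $N(o)=1=F(o)$. In general,
\[
N(z)=\1_{\{o\}}(z)+\sum_{\head(e)=z}\Pi\{\gamma\text{ uses }e\},
\]
where
\[
\Pi\{\gamma\text{ uses }e\}=N(\tail(e))\,\frac{\theta_e}{F(\tail(e))}=\theta_e
\]
by the inductive hypothesis. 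This gives \eqref{eq:flow-edge-marginal} and $N(z)=F(z)$. Then
\[
\Pi\{\gamma\text{ ends at }z\}=N(z)\,\frac{\lambda(z)}{F(z)}=\lambda(z),
\]
which is \eqref{eq:flow-terminal-marginal}. For $z=o$ this shows that the path $(o)$, stopping immediately, carries mass $\lambda(o)$.

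The only delicate point is the bookkeeping that identifies $\Div i_w$ with outflow minus inflow over the oriented support. One also has to check that $o$ has no incoming edges, and the induction should be organized so that $N(\tail(e))$ is known before $N(\head(e))$. A topological ordering by $w$-values handles this; ties in $w$ are harmless because edges with equal endpoint values were discarded.
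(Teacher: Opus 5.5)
Your random-walk construction is a legitimate alternative to the paper's argument, but one step is not actually proved.

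The paper adjoins a cemetery vertex $\dagger$ with arcs $z\to\dagger$ carrying $\lambda(z)$. It then peels off $o$--$\dagger$ paths one at a time, subtracting the bottleneck value each time. That every arc of the support is eventually used comes for free, since a divergence-free residual on an acyclic graph must vanish. You instead define $\Pi$ canonically as the law of the walk that leaves $z$ along $e$ with probability $\theta_e/F(z)$ and stops with probability $\lambda(z)/F(z)$, and you verify the marginals by induction in decreasing $w$. This gives an explicit Markovian path measure rather than a choice-dependent finite one. Only the two marginal identities are used later, so either construction suffices. The cost is that a forward walk from $o$ sees only what is reachable from $o$, and that is where the hole is.

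You assert that $o$ has no incoming edges because ``an incoming edge would have $w(\tail)>w(o)$''. That sentence only describes such an edge. It would be a proof if you knew $w(o)=\max_{\widehat D}w$, which you never establish.

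The point is essential. The walk only visits vertices with $w\le w(o)$, so an edge $e$ into $o$ would get $\Pi$-mass $0\neq\theta_e$, and your base case would read $N(o)=1<F(o)$.

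The claim is true and follows from your conservation identity together with acyclicity. Suppose $\head(e_1)=o$ and set $x_1=\tail(e_1)$. Then $x_1\ne o$, and conservation at $x_1$ gives $\sum_{\head(e)=x_1}\theta_e=\lambda(x_1)+\sum_{\tail(e)=x_1}\theta_e\ge\theta_{e_1}>0$. So some $x_2$ with $w(x_2)>w(x_1)>w(o)$ sends current into $x_1$, and in particular $x_2\ne o$. Iterating produces an infinite strictly $w$-increasing sequence in the finite set $\widehat D$, which is impossible.

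Equivalently, $\lambda\ge0$ on $D\setminus\{o\}$ says $w$ is subharmonic there. The strong maximum principle on the connected set $D$, with $w=0$ on $\cT_D$, then gives $w(o)=\max w$.

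With this inserted, the rest of your argument goes through: termination, $N(z)=F(z)$ by induction, and the endpoint marginals. The case $F(z)=0$ is trivial, since then $\lambda(z)=0$.
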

	
	\begin{proof}
		Adjoin a cemetery vertex $\dagger$, which serves as a sink, and, for every
		$z\in\widehat D$, add a directed arc
		$z\to\dagger$ carrying flow $\lambda(z)$.  These arcs carry no edge weight or
		length and will not enter any energy estimate.  Equation
		\eqref{eq:current-with-loss} shows that the augmented flow has divergence one
		at $o$, zero at the other vertices of $\widehat D$, and $-1$ at $\dagger$.
		It is acyclic.  Every nonzero residual flow with these divergence properties
		contains a directed path from $o$ to $\dagger$: conservation prevents a path
		in the positive support from stopping at an intermediate vertex, and
		acyclicity prevents it from returning to a previous vertex.  Repeatedly choose
		such a path and subtract the minimum flow carried by one of its arcs.  Each
		step removes an arc from the positive support, so the finite procedure
		terminates.  It produces weighted directed paths whose arc marginals
		reproduce the augmented unit flow; their total weight is one.  Delete the
		final cemetery arc from each path.  The remaining paths satisfy
		\eqref{eq:flow-edge-marginal} and \eqref{eq:flow-terminal-marginal}.  The
		path consisting only of $o\to\dagger$ becomes the zero-edge path at $o$.
	\end{proof}
	
	For a nontrivial path
	\[
	\gamma=(x_0=o,e_0,x_1,\ldots,e_{N-1},x_N)
	\]
	in the support of $\Pi$, set
	\begin{equation*}
		w_i:=w(x_i),\qquad
		\ell_i:=\rho(e_i),\qquad
		\delta_i:=w_i-w_{i+1}>0.
	\end{equation*}
	Since the path follows the directed support,
	\begin{equation*}
		w_0>w_1>\cdots>w_N\ge0.
	\end{equation*}
	Define
	\begin{equation*}
		H(\gamma):=\sum_{i=0}^{N-1}\frac{\ell_i^2w_i}{\delta_i}.
	\end{equation*}
	For the zero-edge path, set $H(\gamma)=0$.
	
	\begin{lemma}\label{lem:average-path-energy}
		One has
		\begin{equation*}
			\E_\Pi H(\gamma)
			\le \sum_{x\in D}w(x)m(x).
		\end{equation*}
		Consequently, if $\Gamma$ is a family of paths with $\Pi(\Gamma)=a>0$, then
		\begin{equation}\label{eq:conditional-path-energy}
			\sum_{x\in D}w(x)m(x)
			\ge a\E_\Pi\bigl[H(\gamma)\mid\Gamma\bigr].
		\end{equation}
	\end{lemma}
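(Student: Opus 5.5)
Since $H(\gamma)$ is a sum over the edges of $\gamma$ of a quantity depending only on the edge, I will compute $\E_\Pi H(\gamma)$ edge by edge using the marginal identity \eqref{eq:flow-edge-marginal}. On a directed edge $e$ with tail $x=\tail(e)$ and head $y=\head(e)$, the summand is $\rho(e)^2 w(x)/(w(x)-w(y))$, independent of the path. Exchanging sum and expectation (all sums finite) gives
\[
\E_\Pi H(\gamma)=\sum_{e}\theta_e\,\frac{\rho(e)^2\,w(\tail e)}{w(\tail e)-w(\head e)}
=\sum_{e} b_e\,\rho(e)^2\,w(\tail e),
\]
because $\theta_e=b_e\bigl(w(\tail e)-w(\head e)\bigr)$ cancels the denominator. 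The sum is over the directed support, each underlying edge counted once. The zero-edge path contributes nothing.

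Next I group the edges by their tail. Tails always lie in $D$: a terminal has $w=0$, which is minimal since $w\ge0$, so it can never be the tail of an edge with a strict voltage drop. Each retained or copied edge at $x\in D$ corresponds to a distinct original edge $\{x,y\}$ with the same weight and length. Hence
\[
\sum_e b_e\rho(e)^2 w(\tail e)\le \sum_{x\in D} w(x)\sum_{y\in V} b(x,y)\rho(x,y)^2\le \sum_{x\in D}w(x)m(x),
\]
using $w\ge0$ and the $m$-adapted condition. The inequality in the first step accounts for edges being counted only once and only when $x$ is the tail; discarded zero-current edges are simply dropped, and this is harmless since the terms are nonnegative.

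For \eqref{eq:conditional-path-energy}, I use $H\ge0$:
\[
\E_\Pi H\ge \E_\Pi[H\1_\Gamma]=a\,\E_\Pi[H\mid\Gamma],
\]
and combine this with the first bound. The only point needing care is the bookkeeping of copied edges, namely that each copied edge at $x$ maps injectively to an original edge at $x$. This holds by construction of $\widehat D$, so I expect no real obstacle.
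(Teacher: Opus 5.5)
Your proof is correct and follows the paper's argument essentially step for step: the edge marginal \eqref{eq:flow-edge-marginal} cancels the voltage drop to give $\sum_e b_e\rho(e)^2 w(\tail(e))$, tails lie in $D$ because terminals carry $w=0$, the injective correspondence with original edges plus $m$-adaptedness bounds the per-vertex sums, and $H\ge0$ yields \eqref{eq:conditional-path-energy}. Computing $\E_\Pi H$ through the edge marginals also needs each path to use an edge at most once; this holds because $w$ strictly decreases along every path, and the paper likewise leaves it tacit.
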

	
	\begin{proof}
		By \eqref{eq:flow-edge-marginal},
		\begin{align*}
			\E_\Pi H
			&=\sum_e\theta_e
			\frac{\rho(e)^2w(\tail(e))}
			{w(\tail(e))-w(\head(e))}\\
			&=\sum_e b_e\rho(e)^2w(\tail(e)).
		\end{align*}
		Every tail belongs to $D$, since all terminal copies have value zero.  For a
		fixed $x\in D$, the directed edges in the sum with tail $x$ correspond to a
		subset of the original edges incident with $x$.  Since $\rho$ is
		$m$-adapted,
		\[
		\E_\Pi H
		\le\sum_{x\in D}w(x)\sum_yb(x,y)\rho(x,y)^2
		\le\sum_{x\in D}w(x)m(x).
		\]
		Finally,
		\begin{equation*}
			\E_\Pi H
			\ge\int_\Gamma H\,\dd\Pi
			=a\E_\Pi[H\mid\Gamma],
		\end{equation*}
		which proves \eqref{eq:conditional-path-energy}.
	\end{proof}
	
	\subsection{Pathwise logarithmic estimate}
	
	We begin with an elementary one-dimensional lemma.
	
	\begin{lemma}\label{lem:pathwise-logarithmic}
		Let $v:[0,L]\to[0,\infty)$ be continuous, strictly decreasing, and affine on
		each interval of a finite partition.  Then
		\begin{equation*}
			\int_0^L\frac{v(s)}{-v'(s)}\,\dd s
			\ge \frac1{4\log2}
			\int_0^L
			\frac{s\,\dd s}{\log\!\left(e+v(0)/v(s)\right)},
		\end{equation*}
		where the right-hand integrand is taken to be zero when $v(s)=0$.
	\end{lemma}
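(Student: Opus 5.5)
The plan is to substitute $u(s):=\log\bigl(v(0)/v(s)\bigr)$. This turns the left side into $\int_0^L \dd s/u'(s)$, and the comparison then reduces to a dyadic level decomposition combined with a discrete Hardy inequality.

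\emph{Step 1: reduction.} On each affine piece where $v>0$, the function $u$ is smooth and strictly increasing, with $u'=-v'/v>0$. Hence
\[
\frac{v(s)}{-v'(s)}=\frac1{u'(s)} .
\]
If $v(L)=0$, then $u\to\infty$ only at the endpoint $s=L$, which is a single point. I will therefore prove the estimate on $[0,L-\varepsilon]$ and let $\varepsilon\downarrow0$ by monotone convergence. So I may assume $v>0$ throughout and $u(L)<\infty$.

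\emph{Step 2: dyadic levels.} Let $s_0=0$, and let $s_k$ be the unique time with $u(s_k)=k\log 2$, for $k=1,\dots,n$, where $n$ is the largest such index. Put $s_{n+1}=L$ and $\Delta_k:=s_{k+1}-s_k$. Since $u$ is absolutely continuous and increases by at most $\log2$ on $[s_k,s_{k+1}]$, Cauchy--Schwarz gives
\[
\Delta_k^2\le \int_{s_k}^{s_{k+1}}u'\,\dd s\int_{s_k}^{s_{k+1}}\frac{\dd s}{u'}\le \log 2\int_{s_k}^{s_{k+1}}\frac{\dd s}{u'} .
\]
Summing over $k$ yields
\[
\text{LHS}\ge \frac{1}{\log2}\sum_k\Delta_k^2 .
\]
On the right side, for $s\in[s_k,s_{k+1}]$ we have $v(0)/v(s)\ge 2^k$. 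Hence $\log(e+v(0)/v(s))\ge \max(1,k\log2)\ge c_0(k+1)$ with $c_0=\tfrac12$, since $\max(1,k\log 2)\ge (k+1)/2$ for all $k\ge0$. Therefore
\[
\text{RHS integral}\le \sum_k\frac{s_{k+1}^2-s_k^2}{2c_0(k+1)}=\sum_k\frac{\Delta_k^2+2\Delta_k s_k}{k+1}.
\]

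\emph{Step 3: Hardy inequality, the main step.} The only term that is not immediately controlled by $\sum\Delta_k^2$ is the cross term $\sum_k \Delta_k s_k/(k+1)$, where $s_k=\sum_{j<k}\Delta_j$. I bound it by Cauchy--Schwarz followed by the discrete Hardy inequality
\[
\sum_k\Bigl(\frac1{k+1}\sum_{j\le k}\Delta_j\Bigr)^2\le 4\sum_k\Delta_k^2 .
\]
This gives
\[
\sum_k\frac{\Delta_k s_k}{k+1}\le \Bigl(\sum\Delta_k^2\Bigr)^{1/2}\cdot 2\Bigl(\sum\Delta_k^2\Bigr)^{1/2}=2\sum\Delta_k^2 .
\]
Combining, the right-hand integral is at most $C\sum_k\Delta_k^2\le C\log2\cdot\text{LHS}$ for an absolute constant $C$.

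The main obstacle is the constant bookkeeping needed to reach exactly $1/(4\log 2)$. If the crude choices above give a worse constant, I would sharpen them in two ways. First, I would use $\log(e+2^k)\ge (k+1)\log 2$, which holds for all $k\ge0$ since $e>2$. Second, I would write $s_{k+1}^2-s_k^2\le 2\Delta_k s_{k+1}$ and apply Hardy to the inclusive partial sums $s_{k+1}=\sum_{j\le k}\Delta_j$ directly. This gives the RHS integral at most $\frac{1}{\log 2}\sum_k \Delta_k s_{k+1}/(k+1)\le \frac{2}{\log2}\sum\Delta_k^2\le 2\,\text{LHS}$. That is already much better than $4\log2$. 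The exact constant is therefore not delicate; the substantive content is Steps 2--3.
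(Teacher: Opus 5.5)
Your route is genuinely different from the paper's and its structure is sound. However, the constant bookkeeping uses two false elementary inequalities, and this matters because the statement asserts the specific factor $1/(4\log 2)$. As written, the proposal does not reach that factor.

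\begin{itemize}
\item The claim $\max(1,k\log 2)\ge (k+1)/2$ fails at $k=2$, since $2\log2\approx1.39<1.5$. The bound you actually need, $\log(e+2^k)\ge (k+1)/2$, is nevertheless true: check $k=0,1,2$ directly (e.g.\ $\log(e+4)\approx1.90$), and use $k\log2\ge(k+1)/2$ for $k\ge3$.
\item Even with that bound, your first route estimates $\Delta_k^2/(k+1)$ and $2\Delta_k s_k/(k+1)$ separately. This yields $(1+4)\sum_k\Delta_k^2$, hence only the factor $1/(5\log 2)$.
\item Your proposed sharpening $\log(e+2^k)\ge(k+1)\log2$ is false for every $k\ge2$, because it is equivalent to $e\ge 2^k$. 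The asserted bound by twice the left-hand side is therefore unsupported.
\end{itemize}

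The repair lies within your own framework. Keep $\log(e+2^k)\ge(k+1)/2$, and combine it with $s_{k+1}^2-s_k^2\le 2\Delta_k s_{k+1}$, Cauchy--Schwarz, and Hardy:
\[
\int_0^L\frac{s\,\dd s}{\log(e+v(0)/v(s))}
\le 2\sum_k\frac{\Delta_k s_{k+1}}{k+1}
\le 4\sum_k\Delta_k^2
\le 4\log 2\int_0^L\frac{v(s)}{-v'(s)}\,\dd s .
\]
This gives exactly the stated constant.

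The paper avoids both the level decomposition and Hardy's inequality. With $a=-v'/v$ and $A(t)=\log(v(0)/v(t))$, it applies Cauchy--Schwarz on the window $[t/2,t]$. Together with $\int_{t/2}^t a\le A(t)$, this gives $\int_{t/2}^t \dd s/a\ge t^2/(4A(t))$. It then integrates against $\dd t/t$; by Fubini, each $s$ is counted only for $t\in[s,2s]$, which costs the factor $\log2$. Finally it uses $A(t)\le\log(e+v(0)/v(t))$.

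Those windows are dyadic in the length variable, so each has length comparable to its position and no cross term appears. Your intervals are dyadic in the level variable $u$, so $[s_k,s_{k+1}]$ can be short compared with $s_k$, and Hardy's inequality is exactly what controls $\Delta_k s_k$. What your version buys is transparency about the structure. Writing $s'=\dd s/\dd u$, the left side equals $\int (s')^2\,\dd u$ and the right side is essentially $\int s\,s'\,\dd u/\max(1,u)$, so the lemma is a Hardy inequality in disguise. The paper's averaging over $[t/2,t]$ is a short continuous substitute for that inequality and needs no numerical case checks. Both arguments handle $v(L)=0$ in the same way, by truncation and monotone convergence.
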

	
	\begin{proof}
		Fix $0<T<L$.  All derivatives below are understood almost everywhere; the
		finitely many break points do not affect the integrals.  Strict decrease and
		nonnegativity give $v>0$ on $[0,T]$.  Put
		\[
		a(s):=-\frac{v'(s)}{v(s)},
		\qquad
		A(t):=\int_0^t a(s)\,\dd s
		=\log\frac{v(0)}{v(t)}.
		\]
		Then $a>0$ almost everywhere.
		For $0<t<T$, the Cauchy--Schwarz inequality on $[t/2,t]$ gives
		\[
		\int_{t/2}^t\frac{\dd s}{a(s)}
		\ge\frac{(t/2)^2}{\int_{t/2}^ta(s)\,\dd s}
		\ge\frac{t^2}{4A(t)}.
		\]
		Consequently,
		\begin{align*}
			\int_0^T\frac{t\,\dd t}{A(t)}
			&\le4\int_0^T\frac1t
			\int_{t/2}^t\frac{\dd s}{a(s)}\,\dd t\\
			&=4\int_0^T\frac1{a(s)}
			\int_s^{\min\{2s,T\}}\frac{\dd t}{t}\,\dd s\\
			&\le4\log2\int_0^T\frac{\dd s}{a(s)}\\
			&=4\log2\int_0^T\frac{v(s)}{-v'(s)}\,\dd s.
		\end{align*}
		Since
		\[
		A(t)=\log\frac{v(0)}{v(t)}
		\le\log\!\left(e+\frac{v(0)}{v(t)}\right),
		\]
		the desired right-hand integrand is no larger than $t/A(t)$.  Letting
		$T\uparrow L$ and using monotone convergence proves the result.  If
		$v(L)>0$, one may take $T=L$ directly.
	\end{proof}
	
	For a nontrivial path $\gamma$, set
	\begin{equation*}
		s_i:=\sum_{j=0}^{i-1}\ell_j,
		\qquad 0\le i\le N,
	\end{equation*}
	and let $v_\gamma$ be affine on each $[s_i,s_{i+1}]$ with
	$v_\gamma(s_i)=w_i$.  Then
	\begin{equation}\label{eq:path-energy-integral}
		H(\gamma)
		\ge\int_0^{s_N}
		\frac{v_\gamma(s)}{-v_\gamma'(s)}\,\dd s.
	\end{equation}
	Indeed, on $[s_i,s_{i+1}]$ one has
	$v_\gamma(s_i+t)=w_i-\delta_i t/\ell_i$ and
	$-v_\gamma'=\delta_i/\ell_i$.  Hence
	\begin{align*}
		\int_{s_i}^{s_{i+1}}
		\frac{v_\gamma(s)}{-v_\gamma'(s)}\,\dd s
		&=\frac{\ell_i}{\delta_i}
		\int_0^{\ell_i}\left(w_i-\frac{\delta_i t}{\ell_i}\right)\dd t\\
		&=\frac{\ell_i^2(w_i+w_{i+1})}{2\delta_i}
		\le\frac{\ell_i^2w_i}{\delta_i}.
	\end{align*}
	Summing over $i$ proves \eqref{eq:path-energy-integral}.
	
	The path in $\widehat D$ corresponds to a path in the original graph, with a
	terminal copy replaced by the exterior endpoint that it represents.  Hence
	\begin{equation}\label{eq:prefix-radial}
		\mathfrak r(x_i)\le s_i,
		\qquad 0\le i\le N.
	\end{equation}
	Let $R>0$ satisfy
	\begin{equation*}
		\max_{0\le i\le N}\mathfrak r(x_i)>R.
	\end{equation*}
	For $0<r<R$, define
	\begin{equation*}
		\kappa_r:=\min\{0\le i\le N-1:\mathfrak r(x_{i+1})>r\},
		\qquad Z_r(\gamma):=w_{\kappa_r}.
	\end{equation*}
	The index is well-defined, and its minimality gives
	\begin{equation}\label{eq:first-exit-location}
		\mathfrak r(x_{\kappa_r})\le r
		<\mathfrak r(x_{\kappa_r+1}).
	\end{equation}
	Moreover, $Z_r(\gamma)>0$, since $x_{\kappa_r}$ is the tail of a directed
	edge.
	
	\begin{lemma}\label{lem:discrete-first-exit}
		There is an absolute constant $c>0$ such that, for every nontrivial path in
		the decomposition and every $R$ with
		\begin{equation*}
			0<R<\max_{0\le i\le N}\mathfrak r(x_i),
		\end{equation*}
		one has
		\begin{equation*}
			H(\gamma)
			\ge c\int_0^R
			\frac{r\,\dd r}
			{\log\!\left(e+w(o)/Z_r(\gamma)\right)}.
		\end{equation*}
	\end{lemma}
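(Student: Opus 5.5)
The plan is to treat separately the radii $r$ at which the path has already travelled a distance comparable to $r$ before leaving $B_r$, and the radii at which it leaves $B_r$ through one long edge. For the first kind we use the integral bound \eqref{eq:path-energy-integral} together with Lemma~\ref{lem:pathwise-logarithmic}. For the second kind we use the single edge term $\ell_k^2w_k/\delta_k\ge\ell_k^2$ of $H(\gamma)$, which holds because $0<\delta_k\le w_k$.

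\textbf{Step 1 (monotone bookkeeping).} Put $S(r):=s_{\kappa_r}$ for $0<r<R$. Increasing $r$ can only delay the first index whose successor lies outside $B_r$, so $r\mapsto\kappa_r$ is nondecreasing. Since $w$ strictly decreases along the path, $Z_r=w_{\kappa_r}$ is nonincreasing in $r$. Because $v_\gamma$ is decreasing, $v_\gamma(s)\ge Z_r$ for every $s\le S(r)$.

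\textbf{Step 2 (good radii).} Let $G:=\{r\in(0,R):S(r)\ge r/2\}$. Take $s$ with $2s\in G$. Then $s\le S(2s)\le s_N$ and $v_\gamma(s)\ge Z_{2s}$, so
\[
\frac{s}{\log(e+w(o)/v_\gamma(s))}\ge\frac{s}{\log(e+w(o)/Z_{2s})},
\]
where $v_\gamma(0)=w_0=w(o)$. Combining \eqref{eq:path-energy-integral} with Lemma~\ref{lem:pathwise-logarithmic} and then substituting $r=2s$ gives
\[
4\log2\,H(\gamma)\ge\int_{\{s:\,2s\in G\}}\frac{s\,\dd s}{\log(e+w(o)/Z_{2s})}
=\frac14\int_G\frac{r\,\dd r}{\log(e+w(o)/Z_r)}.
\]

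\textbf{Step 3 (bad radii, the main obstacle).} Here the exit edge may be very long, and on that edge $v_\gamma$ can drop far below $Z_r$, so the pathwise integral gives nothing. Instead, group the bad radii $r\notin G$ according to $k=\kappa_r$. If $\kappa_r=k$ and $s_k<r/2$, then by \eqref{eq:prefix-radial} and \eqref{eq:first-exit-location},
\[
\mathfrak r(x_k)\le s_k<r/2
\qquad\text{and}\qquad
r<\mathfrak r(x_{k+1}).
\]
The triangle inequality for $d_\rho$ gives $\mathfrak r(x_{k+1})\le\mathfrak r(x_k)+\ell_k$; the exterior endpoint represented by a terminal copy is adjacent to $x_k$ with the same length, so this also holds for terminal copies. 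Hence $\ell_k>r/2$. Moreover, every $r$ with $\kappa_r=k$ satisfies $r<\mathfrak r(x_{k+1})<2\ell_k$. Since the logarithm is at least $1$,
\[
\int_{\{r\notin G:\,\kappa_r=k\}}\frac{r\,\dd r}{\log(e+w(o)/Z_r)}\le\int_0^{2\ell_k}r\,\dd r=2\ell_k^2\le\frac{2\ell_k^2w_k}{\delta_k}.
\]
Summing over $k$ bounds the whole bad contribution by $2H(\gamma)$.

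\textbf{Step 4.} Adding Steps 2 and 3 gives
\[
\int_0^R\frac{r\,\dd r}{\log(e+w(o)/Z_r(\gamma))}\le(16\log2+2)\,H(\gamma),
\]
so the lemma holds with $c=(16\log2+2)^{-1}$. It remains only to check measurability of $G$ and of the level sets $\{\kappa_r=k\}$. This is clear: $\kappa_r$ is a step function of $r$, so each $\{\kappa_r=k\}$ is an interval and $G$ is a finite union of intervals.
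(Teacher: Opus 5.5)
Your proposal is correct and follows the paper's proof essentially step for step. It uses the same split of $(0,R)$ according to whether $s_{\kappa_r}\ge r/2$, the same application of Lemma~\ref{lem:pathwise-logarithmic} with the substitution $r=2s$ on the good radii, and the same single-edge bound $2\ell_k^2\le 2\ell_k^2w_k/\delta_k$ on the long-edge radii, giving the identical constant $c=(2+16\log 2)^{-1}$. The monotonicity of $\kappa_r$ in your Step~1 is never used, and your triangle-inequality route to $\ell_k>r/2$ is equivalent to the paper's use of \eqref{eq:prefix-radial} at index $k+1$.
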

	
	\begin{proof}
		By Lemma~\ref{lem:pathwise-logarithmic} and
		\eqref{eq:path-energy-integral},
		\begin{equation}\label{eq:path-logarithmic-control}
			\int_0^{s_N}
			\frac{s\,\dd s}{\log\!\left(e+w(o)/v_\gamma(s)\right)}
			\le4\log2\,H(\gamma).
		\end{equation}
		At the first crossing of radius $r$, either the path has already accumulated
		at least $r/2$ units of length, or the crossing edge itself has length
		greater than $r/2$.  Accordingly, split $(0,R)$ into
		\[
		E_1:=\{r:s_{\kappa_r}\ge r/2\},
		\qquad
		E_2:=\{r:s_{\kappa_r}<r/2\}.
		\]
		For $r\in E_1$, monotonicity of $v_\gamma$ gives
		$Z_r\le v_\gamma(r/2)$.  Moreover, $R<s_N$ by
		\eqref{eq:prefix-radial}.  Hence
		\begin{equation}\label{eq:short-edge-radii}
			\begin{aligned}
				\int_{E_1}\frac{r\,\dd r}{\log(e+w(o)/Z_r)}
				&\le\int_0^R
				\frac{r\,\dd r}{\log(e+w(o)/v_\gamma(r/2))}\\
				&=4\int_0^{R/2}
				\frac{s\,\dd s}{\log(e+w(o)/v_\gamma(s))}\\
				&\le16\log2\,H(\gamma).
			\end{aligned}
		\end{equation}
		
		If $r\in E_2$ and $\kappa_r=i$, then
		\eqref{eq:prefix-radial} and \eqref{eq:first-exit-location} imply
		\[
		r<\mathfrak r(x_{i+1})\le s_i+\ell_i<r/2+\ell_i,
		\]
		so $r<2\ell_i$.  Since $w(o)\ge w_i>0$ and
		$0<\delta_i\le w_i$, one has
		$\log(e+w(o)/w_i)>1$, and therefore
		\begin{equation}\label{eq:long-edge-radii}
			\int_{\{r\in E_2:\,\kappa_r=i\}}
			\frac{r\,\dd r}{\log(e+w(o)/w_i)}
			\le2\ell_i^2
			\le2\frac{\ell_i^2w_i}{\delta_i}.
		\end{equation}
		Summing \eqref{eq:long-edge-radii} and using
		\eqref{eq:short-edge-radii} gives
		\[
		\int_0^R
		\frac{r\,\dd r}{\log(e+w(o)/Z_r)}
		\le(2+16\log2)H(\gamma),
		\]
		which proves the lemma.
	\end{proof}
	
	The decomposition into the two sets of radii is needed only for long edges.
	The radii crossed by such an edge are estimated directly by its contribution
	to the path energy, without using a cable construction or an edge refinement.
	
	\subsection{The finite-domain capacity estimate}
	
	We first record the capacity estimate for selected path segments used below.
	A closely related form appeared in
	\cite{GuHaoHuangSunLaneEmden2026}; we include the proof for completeness.  Let
	$\Gamma$ be a family of paths in the flow decomposition with
	$\Pi(\Gamma)=a>0$.  On
	each $\gamma\in\Gamma$, choose a nonempty directed segment starting at a
	vertex $x_{\tau(\gamma)}\in A$ and ending at a vertex
	$x_{\sigma(\gamma)}\in K$, where $A,K\subseteq\widehat D$ are disjoint.  Set
	\begin{equation*}
		Z(\gamma):=w(x_{\tau(\gamma)})>0.
	\end{equation*}
	
	\begin{lemma}\label{lem:selected-segment-capacity}
		With relative capacity computed in the finite graph $\widehat D$,
		\begin{equation}\label{eq:selected-segment-capacity}
			\E_\Pi\left[\frac1Z\,\middle|\,\Gamma\right]
			\le \frac{\capacity_{\widehat D}(A,K)}a.
		\end{equation}
	\end{lemma}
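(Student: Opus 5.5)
The plan is to build from the selected segments a flow from $A$ to $K$ in $\widehat D$ and then apply Thomson's principle \eqref{eq:thomson-finite-network}. The flow I have in mind is
\[
J:=\int_\Gamma \frac{1}{Z(\gamma)}\,\1_{\text{segment of }\gamma}\,\dd\Pi(\gamma),
\]
where each directed segment is viewed as a unit path flow from $x_{\tau(\gamma)}\in A$ to $x_{\sigma(\gamma)}\in K$. Concretely, for a directed edge $e$,
\[
J(e):=\int_\Gamma Z(\gamma)^{-1}\1\{e\in\text{segment of }\gamma\}\,\dd\Pi .
\]
On the reversed orientation we set $J=-J(e)$.

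\textbf{Step 1: $J$ is a flow of strength $\E_\Pi[1/Z;\Gamma]$.} Each segment is a path from $A$ to $K$ in the directed support. It contributes divergence $+1/Z$ at its start, $-1/Z$ at its end, and $0$ at interior vertices. Because $A\cap K=\emptyset$, the start and end are distinct. Hence $J$ is a flow from $A$ to $K$, and
\[
|J|=\int_\Gamma Z^{-1}\,\dd\Pi=a\,\E_\Pi[1/Z\mid\Gamma].
\]
A segment may pass through further vertices of $A$ or $K$. This only adds zero net divergence there, so the sign conditions still hold.

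\textbf{Step 2: bound the energy, which I expect to be the main obstacle.} Along a directed path the values of $w$ strictly decrease. So if a segment uses edge $e$, its starting value satisfies $Z(\gamma)\ge w(\tail(e))$. This gives
\[
0\le J(e)\le \frac{\Pi\{\gamma\text{ uses }e\}}{w(\tail(e))}=\frac{\theta_e}{w(\tail(e))}
\]
by \eqref{eq:flow-edge-marginal}. Tails lie in $D$, where $w>0$ because they are tails of directed edges. I will use this bound only once in $J(e)^2$, writing $J(e)^2\le J(e)\,\theta_e/w(\tail(e))$. Then
\[
\cD(J)=\sum_e\frac{J(e)^2}{b_e}\le\sum_e J(e)\frac{\theta_e}{b_e\,w(\tail e)}=\sum_e J(e)\,\frac{w(\tail e)-w(\head e)}{w(\tail e)}.
\]
Now substitute the definition of $J$ and exchange the sum and the integral. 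For a fixed $\gamma$ the inner sum runs over the edges of its segment, and I bound it with the factor $1/w(\tail e)\le$ ... This direction looks wrong, so I instead use $J(e)\le\theta_e/w(\tail e)$ in a different form. Writing $\theta_e/b_e=w(\tail e)-w(\head e)$, the sum becomes
\[
\sum_e J(e)\bigl(w(\tail e)-w(\head e)\bigr)/w(\tail e).
\]
Rather than that, I would pair $J$ with the telescoping potential. For each $\gamma$ the segment contributes
\[
Z^{-1}\sum_{e\in\text{seg}}\theta_e/(b_e\,\cdot)\ldots
\]
The cleanest route is to write
\[
\cD(J)\le\int_\Gamma Z^{-1}\sum_{e\in\mathrm{seg}(\gamma)}\frac{J(e)}{b_e}\,\dd\Pi
\]
and then use $J(e)/b_e\le (w(\tail e)-w(\head e))/w(\tail e)$. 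That is still not telescoping. So I will replace the crude bound by the sharper $J(e)\le \theta_e/w(\tail e)$ together with $1/Z\le 1/w(\tail e)$, which I need to check carefully. What the argument needs is
\[
\sum_{e\in\mathrm{seg}}\frac{J(e)}{b_e}\le \sum_{e\in\mathrm{seg}}\frac{w(\tail e)-w(\head e)}{Z(\gamma)}\le 1,
\]
where the last inequality is telescoping and uses $w\ge0$. To get $J(e)\le\theta_e/Z(\gamma)$ for every $\gamma$ through $e$, I would choose the segments so that all paths through $e$ share a common starting value, or else split $J$ by levels. The fallback is the bound $\cD(J)\le|J|$, which is the target. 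I expect establishing $\cD(J)\le|J|$ to be the hard step.

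\textbf{Step 3: conclude.} Once $\cD(J)\le|J|$ holds, Thomson's principle gives $|J|^2/\capacity_{\widehat D}(A,K)\le\cD(J)\le|J|$. Hence $|J|\le\capacity_{\widehat D}(A,K)$, and dividing by $a$ yields \eqref{eq:selected-segment-capacity}.
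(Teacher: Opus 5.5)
\textbf{Genuine gap in Step 2.} Your Steps 1 and 3 match the paper: its flow $\Phi$ is exactly your $J$, and the conclusion via Thomson's principle is the same. But Step 2, the energy bound $\cD(J)\le|J|$, is never proved, and it is the substantive part of the lemma. Calling it ``the fallback'' only restates the target.

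The route you try first cannot work. Replacing $Z(\gamma)$ by its worst case $w(\tail e)$ leads to
\[
\cD(J)\le\int_\Gamma\frac1{Z(\gamma)}\sum_{e\in\mathrm{seg}(\gamma)}\Bigl(1-\frac{w(\head e)}{w(\tail e)}\Bigr)\,\dd\Pi(\gamma).
\]
The inner sum is not bounded by $1$. For a segment ending at a terminal, the last edge alone contributes $1$, and in general the sum can be as large as the number of edges in the segment. Your proposed repairs are not available either. The segments are prescribed data in the lemma, so you cannot re-choose them to share a common starting value. ``Splitting by levels'' is not developed, and energy is quadratic, so it does not add over a splitting of the flow.

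\textbf{The missing idea.} Apply Cauchy--Schwarz in the path measure instead of a sup bound, so that each path keeps its own weight $Z(\gamma)^{-2}$:
\[
J(e)^2\le\Pi\{\gamma\in\Gamma:e\in\mathrm{seg}(\gamma)\}\int_\Gamma\frac{\1\{e\in\mathrm{seg}(\gamma)\}}{Z(\gamma)^2}\,\dd\Pi
\le\theta_e\int_\Gamma\frac{\1\{e\in\mathrm{seg}(\gamma)\}}{Z(\gamma)^2}\,\dd\Pi .
\]
Then:
\begin{itemize}
\item divide by $b_e$ and use $\theta_e/b_e=w(\tail e)-w(\head e)$;
\item exchange the sum over edges with the integral (Tonelli);
\item observe that the voltage drops telescope along each selected segment to $Z(\gamma)-w(x_{\sigma(\gamma)})\le Z(\gamma)$.
\end{itemize}
This cancels one power of $Z(\gamma)$ and gives $\cD(J)\le\int_\Gamma Z^{-1}\,\dd\Pi=|J|$. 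With this inserted, your Step 3 finishes the proof exactly as in the paper.
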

	\begin{proof}
		Use the orientation fixed above, and write $\gamma[\tau,\sigma]$ for the
		selected segment of $\gamma$.  For an edge $e$ in the directed support, define
		\begin{equation*}
			\Phi(e):=
			\int_\Gamma
			\frac{\1_{\{e\in\gamma[\tau,\sigma]\}}}{Z(\gamma)}\,\dd\Pi(\gamma),
		\end{equation*}
		and extend $\Phi$ antisymmetrically, setting it equal to zero off the directed
		support.  The unit edge indicator of a selected segment has divergence $1$ at
		its initial vertex, $-1$ at its terminal vertex, and zero elsewhere.  Hence
		\begin{equation*}
			\Div\Phi(x)=
			\int_\Gamma
			\frac{
				\1_{\{x=x_{\tau(\gamma)}\}}
				-\1_{\{x=x_{\sigma(\gamma)}\}}}
			{Z(\gamma)}\,\dd\Pi(\gamma).
		\end{equation*}
		Thus $\Phi$ is a flow from $A$ to $K$, with strength
		\begin{equation}\label{eq:selected-segment-strength}
			|\Phi|
			=\int_\Gamma\frac{\dd\Pi(\gamma)}{Z(\gamma)}
			=a\E_\Pi\left[Z^{-1}\,\middle|\,\Gamma\right].
		\end{equation}
		
		For every edge $e$ in the directed support, the Cauchy--Schwarz inequality and
		\eqref{eq:flow-edge-marginal} give
		\begin{align*}
			\Phi(e)^2
			&\le
			\Pi\{\gamma\in\Gamma:e\in\gamma[\tau,\sigma]\}
			\int_\Gamma
			\frac{\1_{\{e\in\gamma[\tau,\sigma]\}}}{Z(\gamma)^2}\,\dd\Pi(\gamma)\\
			&\le \theta_e
			\int_\Gamma
			\frac{\1_{\{e\in\gamma[\tau,\sigma]\}}}{Z(\gamma)^2}\,\dd\Pi(\gamma).
		\end{align*}
		Since
		$\theta_e/b_e=w(\tail(e))-w(\head(e))$, Tonelli's theorem and telescoping
		along each selected segment yield
		\begin{align*}
			\cD_{\widehat D}(\Phi)
			&\le
			\int_\Gamma
			\frac{
				\sum_{e\text{ in the selected segment of }\gamma}
				\bigl(w(\tail(e))-w(\head(e))\bigr)}
			{Z(\gamma)^2}\,\dd\Pi(\gamma)\\
			&=\int_\Gamma
			\frac{w(x_{\tau(\gamma)})-w(x_{\sigma(\gamma)})}
			{Z(\gamma)^2}\,\dd\Pi(\gamma)\\
			&\le\int_\Gamma\frac{\dd\Pi(\gamma)}{Z(\gamma)}
			=|\Phi|.
		\end{align*}
		Here we used $Z(\gamma)=w(x_{\tau(\gamma)})>0$ and
		$w(x_{\sigma(\gamma)})\ge0$.  By Thomson's principle
		\eqref{eq:thomson-finite-network},
		\begin{equation*}
			\frac{|\Phi|^2}{\capacity_{\widehat D}(A,K)}
			\le\cD_{\widehat D}(\Phi)
			\le|\Phi|.
		\end{equation*}
		Combining this with \eqref{eq:selected-segment-strength} proves
		\eqref{eq:selected-segment-capacity}.
	\end{proof}
	
	Lemma~\ref{lem:selected-segment-capacity} is the capacitary input for the
	following central finite-domain lemma.
	
	\begin{lemma}\label{lem:finite-domain-capacity}
		Let $D\subset V$ be finite and connected, let $o\in D$, and suppose that
		$B_R\subseteq D$ for some $R>0$.  Let $w:\widehat D\to[0,\infty)$ vanish on
		$\cT_D$, and let $\lambda\ge0$ be defined by
		\eqref{eq:current-with-loss}.  Set
		\begin{equation*}
			p:=\sum_{t\in\cT_D}\lambda(t),
		\end{equation*}
		and assume that $p>0$.  Then
		\begin{equation}\label{eq:finite-domain-capacity}
			\sum_{x\in D}w(x)m(x)
			\ge cp\int_0^R
			\frac{r\,\dd r}
			{\log\!\left(e+w(o)\capacity_D(B_r)/p\right)},
		\end{equation}
		where $c>0$ is an absolute constant.
	\end{lemma}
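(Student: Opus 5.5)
Let $\Gamma$ be the family of paths in the decomposition of Lemma~\ref{lem:flow-decomposition} that end at a terminal vertex of $\cT_D$. By \eqref{eq:flow-terminal-marginal}, $\Pi(\Gamma)=p>0$. The plan is to combine three earlier results: Lemma~\ref{lem:average-path-energy} on $\Gamma$, the pathwise bound of Lemma~\ref{lem:discrete-first-exit} on each $\gamma\in\Gamma$, and Lemma~\ref{lem:selected-segment-capacity} for every fixed radius $r$. The two sides are joined by Jensen's inequality, using convexity of $u\mapsto 1/\log(e+u)$.

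First, every $\gamma\in\Gamma$ ends at some $t_e$. The label $\mathfrak r(t_e)=d_\rho(o,y)$ is that of a vertex $y\notin D\supseteq B_R$, so $\mathfrak r(t_e)>R$, and $\gamma$ is nontrivial. Hence Lemma~\ref{lem:discrete-first-exit} applies with this $R$ and gives $H(\gamma)\ge c\int_0^R r\,\dd r/\log(e+w(o)/Z_r(\gamma))$. Averaging over $\Gamma$ and using \eqref{eq:conditional-path-energy} with $a=p$, then Tonelli, yields
\[
\sum_{x\in D}w(x)m(x)\ge c\,p\int_0^R r\,\E_\Pi\Bigl[\frac{1}{\log(e+w(o)/Z_r)}\,\Big|\,\Gamma\Bigr]\dd r .
\]

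Second, fix $r\in(0,R)$. On each $\gamma\in\Gamma$, select the segment from $x_{\kappa_r}$ to the final terminal vertex. Its initial vertex lies in $A:=B_r$, because $\mathfrak r(x_{\kappa_r})\le r<R$ forces $x_{\kappa_r}\in D$ and $d_\rho(o,x_{\kappa_r})\le r$. Its final vertex lies in $K:=\cT_D$. The segment is nonempty because $\kappa_r\le N-1$. Moreover $A$ and $K$ are disjoint, and $Z=w(x_{\kappa_r})=Z_r(\gamma)$. Lemma~\ref{lem:selected-segment-capacity} together with \eqref{eq:ambient-terminal-capacity} then gives
\[
\E_\Pi[1/Z_r\mid\Gamma]\le \capacity_{\widehat D}(B_r,\cT_D)/p=\capacity_D(B_r)/p .
\]
Here $D$ is connected, so the terminal-copy identification applies; and $B_r\subseteq B_R\subseteq D$.

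Third, convert this bound on $\E[1/Z_r]$ into a lower bound for $\E[1/\log(e+w(o)/Z_r)]$. The function $\phi(u)=1/\log(e+u)$ is convex and decreasing on $[0,\infty)$. Indeed $\phi'=-1/((e+u)\log^2(e+u))$, and its absolute value decreases in $u$. Jensen's inequality therefore gives
\[
\E[\phi(w(o)/Z_r)]\ge\phi\bigl(w(o)\E[1/Z_r]\bigr)\ge\phi\bigl(w(o)\capacity_D(B_r)/p\bigr).
\]
Substituting into the first display proves \eqref{eq:finite-domain-capacity} with the constant $c$ of Lemma~\ref{lem:discrete-first-exit}.

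The step needing most care is the bookkeeping in the second paragraph: checking that the first-exit vertex lies in $B_r$ and that the terminal set plays the role of $V\setminus D$, so that the finite-network capacity equals the ambient $\capacity_D(B_r)$. Verifying convexity of $\phi$ is routine. Measurability in $r$ poses no difficulty, since $\Pi$ is a finite discrete measure.
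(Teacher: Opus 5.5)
Your proposal is correct and follows the paper's proof essentially step for step. It uses the same family of paths ending in $\cT_D$ (mass $p$), combines Lemmas~\ref{lem:average-path-energy} and~\ref{lem:discrete-first-exit} via Tonelli, and applies Lemma~\ref{lem:selected-segment-capacity} with $A=B_r$, $K=\cT_D$ together with \eqref{eq:ambient-terminal-capacity}, finishing with Jensen for the convex decreasing function $u\mapsto 1/\log(e+u)$. Your extra bookkeeping, that the first-exit vertex lies in $B_r$ and that every such path is nontrivial with radial reach exceeding $R$, simply spells out details the paper states briefly.
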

	
	\begin{proof}
		Let
		\begin{equation*}
			\Gamma_\partial:=\{\gamma:\gamma\text{ ends in }\cT_D\}.
		\end{equation*}
		By \eqref{eq:flow-terminal-marginal}, this event has mass $p$.  Every terminal
		represents a vertex outside $D$, and hence outside $B_R$.  Therefore every path
		in $\Gamma_\partial$ reaches radial distance greater than $R$.
		
		Fix $0<r<R$.  On each path in $\Gamma_\partial$, select the segment from the
		tail of the first edge leaving $B_r$ to the terminal.  The segment may later
		re-enter $B_r$; this causes no difficulty because its divergence is supported
		only at its initial and terminal vertices.  Its initial voltage $Z_r$ is
		positive because voltage decreases strictly along every edge of the path.
		Since every initial vertex lies in $B_r$,
		Lemma~\ref{lem:selected-segment-capacity} applies with $A=B_r$ and
		$K=\cT_D$.  Together with \eqref{eq:ambient-terminal-capacity}, this gives
		\begin{equation*}
			\E_\Pi\left[Z_r^{-1}\,\middle|\,\Gamma_\partial\right]
			\le\frac{\capacity_{\widehat D}(B_r,\cT_D)}p
			=\frac{\capacity_D(B_r)}p.
		\end{equation*}
		
		By Lemma~\ref{lem:average-path-energy}, Lemma~\ref{lem:discrete-first-exit}
		applied with $R$, and Tonelli's theorem,
		\begin{equation*}
			\sum_{x\in D}w(x)m(x)
			\ge cp\int_0^R r\,
			\E_\Pi\left[
			\frac1{\log(e+w(o)/Z_r)}\,\middle|\,\Gamma_\partial
			\right]\dd r.
		\end{equation*}
		Since $p>0$, the event $\Gamma_\partial$ is nonempty.  Every path in it
		contains a directed edge, and hence $w(o)>0$.  The function
		\[
		t\longmapsto\frac1{\log(e+w(o)t)}
		\]
		is decreasing and convex on $[0,\infty)$.  Jensen's inequality, applied under
		the conditional probability
		$\Pi(\,\cdot\,\mid\Gamma_\partial)$, therefore gives
		\begin{align*}
			\E_\Pi\left[
			\frac1{\log(e+w(o)/Z_r)}\,\middle|\,\Gamma_\partial
			\right]
			&\ge
			\frac1{\log\!\left(
				e+w(o)\E_\Pi[Z_r^{-1}\mid\Gamma_\partial]
				\right)}\\
			&\ge
			\frac1{\log\!\left(
				e+w(o)\capacity_D(B_r)/p
				\right)}.
		\end{align*}
		Substitution into the preceding integral estimate proves
		\eqref{eq:finite-domain-capacity}.
	\end{proof}
	
	The relative-capacity form is an immediate consequence.
	
	\begin{corollary}\label{cor:finite-relative-capacity}
		Under the assumptions of Lemma~\ref{lem:finite-domain-capacity},
		\begin{equation*}
			\sum_{x\in D}w(x)m(x)
			\ge cp\int_0^{R/2}
			\frac{r\,\dd r}
			{\log\!\left(e+w(o)\capacity_{B_{2r}}(B_r)/p\right)}.
		\end{equation*}
	\end{corollary}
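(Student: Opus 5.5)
We plan to deduce the corollary directly from Lemma~\ref{lem:finite-domain-capacity}, comparing the capacities in the two integrands and then adjusting the range of integration.

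First, for $0<r\le R/2$ we have $B_r\subseteq B_{2r}\subseteq B_R\subseteq D$. The sets are finite because $d_\rho$ is complete, so $\capacity_{B_{2r}}(B_r)$ is defined. Domain monotonicity of relative capacity, recorded after \eqref{eq:relative-capacity-domain}, then gives
\[
\capacity_D(B_r)\le\capacity_{B_{2r}}(B_r).
\]
Every function admissible for $\capacity_{B_{2r}}(B_r)$ equals $1$ on $B_r$ and $0$ off $B_{2r}\subseteq D$, so it is also admissible for $\capacity_D(B_r)$. Since $t\mapsto 1/\log(e+w(o)t/p)$ is nonincreasing in $t\ge0$, with $w(o)\ge0$ and $p>0$, we obtain for these $r$
\[
\frac{r}{\log\!\left(e+w(o)\capacity_D(B_r)/p\right)}
\ge
\frac{r}{\log\!\left(e+w(o)\capacity_{B_{2r}}(B_r)/p\right)}.
\]

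Second, the integrand in \eqref{eq:finite-domain-capacity} is nonnegative. We may therefore shrink the interval of integration from $(0,R)$ to $(0,R/2)$, which only decreases the right-hand side. Combining this with the pointwise comparison above and Lemma~\ref{lem:finite-domain-capacity} gives the claim, with the same absolute constant $c$.

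The argument is routine. The only point needing care is that the inclusion $B_{2r}\subseteq D$ is required for the monotonicity comparison, and this is exactly why the upper limit drops from $R$ to $R/2$. Measurability of $r\mapsto\capacity_{B_{2r}}(B_r)$ needs a brief remark. This function takes finitely many values on $(0,R/2]$, because only finitely many distinct balls occur in the finite set $B_R$. It is also piecewise constant on intervals determined by the finitely many distances $d_\rho(o,x)$, $x\in B_R$. The integral is therefore well defined.
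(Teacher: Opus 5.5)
Your proposal is correct and follows the paper's proof exactly: for $r<R/2$ you use $B_{2r}\subseteq B_R\subseteq D$ and domain monotonicity to get $\capacity_D(B_r)\le\capacity_{B_{2r}}(B_r)$, then restrict the integral in Lemma~\ref{lem:finite-domain-capacity} to $(0,R/2)$ and use that the integrand decreases in the capacity. Your added measurability remark is harmless and accurate; the paper does not bother with it.
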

	
	\begin{proof}
		For $0<r<R/2$, one has $B_{2r}\subseteq D$, and domain monotonicity gives
		\[
		\capacity_D(B_r)\le\capacity_{B_{2r}}(B_r).
		\]
		Restricting \eqref{eq:finite-domain-capacity} to $(0,R/2)$ and using the fact
		that the integrand decreases with the capacity proves the claim.
	\end{proof}
	
	The following cutoff estimate is standard.  We include its short proof for
	completeness.
	
	\begin{lemma}\label{lem:intrinsic-cutoff}
		For every $r>0$,
		\begin{equation*}
			\capacity_{B_{2r}}(B_r)
			\le\frac{m(B_{2r})}{r^2}.
		\end{equation*}
	\end{lemma}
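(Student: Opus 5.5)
I will test the capacity with the standard intrinsic cutoff
\[
\varphi(x):=\Bigl(1-\frac{(d_\rho(o,x)-r)_+}{r}\Bigr)_+ ,
\]
which equals one on $B_r$ and vanishes outside the open ball $\{d_\rho(o,\cdot)<2r\}$, hence on $V\setminus B_{2r}$. The set $B_{2r}$ is finite by completeness, so $\varphi$ is finitely supported and admissible in \eqref{eq:relative-capacity-domain} with $\Omega=B_{2r}$ and $A=B_r$. It then suffices to show $\cE(\varphi)\le m(B_{2r})/r^2$.

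The first step is the Lipschitz bound. The function $x\mapsto d_\rho(o,x)$ is $1$-Lipschitz for $d_\rho$, and for adjacent $x\sim y$ one has $d_\rho(x,y)\le\rho(x,y)$ because $d_\rho$ is the path metric. Composition with the $1/r$-Lipschitz map $t\mapsto(1-(t-r)_+/r)_+$ gives
\[
|\varphi(x)-\varphi(y)|\le\frac{\rho(x,y)}{r},\qquad x\sim y .
\]

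The second step is to organize the energy so that every edge is charged to an endpoint in $B_{2r}$. An edge contributes only if $\varphi(x)\neq\varphi(y)$, so at least one endpoint has $\varphi>0$ and therefore lies in $B_{2r}$. The cleanest bookkeeping writes
\[
(\varphi(x)-\varphi(y))^2\le\bigl(\1_{B_{2r}}(x)+\1_{B_{2r}}(y)\bigr)(\varphi(x)-\varphi(y))^2 ,
\]
which is valid because the bracket is at least one whenever the square is nonzero. By symmetry of the double sum this gives
\[
\cE(\varphi)\le\sum_{x\in B_{2r}}\sum_{y}b(x,y)(\varphi(x)-\varphi(y))^2
\le\frac1{r^2}\sum_{x\in B_{2r}}\sum_y b(x,y)\rho(x,y)^2 .
\]

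The final step uses the $m$-adapted condition $\sum_y b(x,y)\rho(x,y)^2\le m(x)$, which bounds the last expression by $m(B_{2r})/r^2$.

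The only delicate point is the bookkeeping factor. The identity $\frac12\sum_{x,y}=\sum_{\text{edges}}$ combined with charging each edge to one endpoint in $B_{2r}$ gives exactly the constant $1$. I must make sure edges with both endpoints in $B_{2r}$ are not double counted incorrectly. The symmetric weighting above handles this: the factor $\frac12$ absorbs the factor $2$ arising from the two indicator terms. Everything else is routine.
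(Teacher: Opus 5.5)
Your proposal is correct and is essentially the paper's proof: you use the same radial cutoff, the same bound $|\varphi(x)-\varphi(y)|\le\rho(x,y)/r$ from the path-metric property, and the same step of charging the energy to vertices of $B_{2r}$ before applying $m$-adaptedness. Your symmetric-indicator bookkeeping just makes explicit what the paper compresses into the remark that only edges with at least one endpoint in $B_{2r}$ contribute.
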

	
	\begin{proof}
		Use the radial cutoff
		\[
		\eta_r(x)=
		\begin{cases}
			1,&d_\rho(o,x)\le r,\\
			(2r-d_\rho(o,x))/r,&r<d_\rho(o,x)<2r,\\
			0,&d_\rho(o,x)\ge2r.
		\end{cases}
		\]
		Since $B_{2r}$ is finite, $\eta_r$ is finitely supported and admissible for
		$\capacity_{B_{2r}}(B_r)$.
		The path-metric property gives
		$|\eta_r(x)-\eta_r(y)|\le\rho(x,y)/r$ on every edge.  Only edges with at
		least one endpoint in $B_{2r}$ contribute.  Therefore
		\begin{align*}
			\capacity_{B_{2r}}(B_r)
			&\le\cE(\eta_r)\\
			&\le\frac1{r^2}
			\sum_{x\in B_{2r}}\sum_yb(x,y)\rho(x,y)^2\\
			&\le\frac{m(B_{2r})}{r^2}.
		\end{align*}
	\end{proof}
	
	\section{Capacity and stochastic completeness}\label{sec:killed}
	We now apply the finite-domain estimate to the killed $1$-resolvent.  This
	first gives the capacity-to-infinity criterion; the relative-capacity and
	volume criteria then follow by comparison and the intrinsic cutoff.
	Throughout this section, $\rho$ is an $m$-adapted edge-length function whose
	intrinsic path metric is complete, and $B_R=B_\rho(o,R)$.
	
	\subsection{Resolvent mass and boundary current}
	
	Let $D\subset V$ be finite and connected, with $o\in D$, and let $g_{1,D}$
	solve \eqref{eq:finite-one-resolvent}.  The following standard probabilistic
	representation follows from the killed Feynman--Kac and Laplace-transform
	formulas; see
	\cite[Section~2.1.1, p.~145, and Lemma~2.32, pp.~175--176]
	{KellerLenzWojciechowski2021}.
	
	\begin{lemma}\label{lem:resolvent-occupation}
		For every $x\in D$,
		\begin{equation}\label{eq:resolvent-occupation}
			g_{1,D}(x)m(x)
			=\E_o\int_0^{\tau_D}e^{-t}\1_{\{X_t=x\}}\,\dd t.
		\end{equation}
		Consequently, setting
		\begin{equation*}
			p_D:=\E_o e^{-\tau_D},
		\end{equation*}
		we have
		\begin{equation}\label{eq:resolvent-mass-split}
			\sum_{x\in D}g_{1,D}(x)m(x)=1-p_D.
		\end{equation}
	\end{lemma}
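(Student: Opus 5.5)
We prove \eqref{eq:resolvent-occupation} by showing that the right-hand side solves the same finite Dirichlet problem \eqref{eq:finite-one-resolvent}, and then use uniqueness. The mass identity \eqref{eq:resolvent-mass-split} will then follow by summing over $x\in D$.

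\textbf{Step 1: uniqueness.} For finite $D$, the operator $L+m$ acting on functions on $D$ extended by zero is a symmetric matrix. Its quadratic form is
\[
\sum_{x\in D} f(x)\bigl(Lf(x)+m(x)f(x)\bigr)=\cE(f)+\sum_{x\in D} m(x)f(x)^2>0
\]
for $f\ne0$. The matrix is therefore invertible, and \eqref{eq:network-one-resolvent} has exactly one solution.

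\textbf{Step 2: the occupation density solves the equation.} Set
\[
u(x):=\frac{1}{m(x)}\E_o\int_0^{\tau_D}e^{-t}\1_{\{X_t=x\}}\,\dd t .
\]
It is cleaner to treat the pole as a variable. Put $G(y,x):=\E_y\int_0^{\tau_D}e^{-t}\1_{\{X_t=x\}}\,\dd t$ and apply a first-jump decomposition from the starting point $y\in D$. The holding time at $y$ is exponential with rate $q(y)=\deg(y)/m(y)$, where $\deg(y)=\sum_z b(y,z)$. The chain then jumps to $z$ with probability $b(y,z)/\deg(y)$. The time before the first jump contributes $\1_{\{y=x\}}/(1+q(y))$. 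By the strong Markov property, the remainder contributes
\[
\frac{q(y)}{1+q(y)}\sum_z \frac{b(y,z)}{\deg(y)}\,G(z,x),
\]
with $G(z,x)=0$ for $z\notin D$, since the chain is then killed. Multiplying by $m(y)(1+q(y))$ gives
\[
(L+m)G(\cdot,x)(y)=m(y)\1_{\{y=x\}}, \qquad y\in D .
\]

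Because the killed chain is $m$-reversible, $G(y,x)/m(x)$ is symmetric in $x$ and $y$. In matrix terms, $G(\cdot,x)/m(x)$ is the column of $(L+m)^{-1}$ at $x$, and this matrix is symmetric. Hence $u=G(o,\cdot)/m(\cdot)$ solves \eqref{eq:network-one-resolvent}. By Step 1, $u=g_{1,D}$, which proves \eqref{eq:resolvent-occupation}. Alternatively, one may simply quote the cited Feynman--Kac and Laplace-transform formulas.

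\textbf{Step 3: the mass identity.} Summing \eqref{eq:resolvent-occupation} over $x\in D$ gives $\E_o\int_0^{\tau_D}e^{-t}\,\dd t=1-\E_o e^{-\tau_D}$. This uses $X_t\in D$ for $t<\tau_D$, Tonelli's theorem to interchange sum and expectation, and the convention $e^{-\infty}=0$. The quantity $\tau_D$ is almost surely finite in any case, since $D$ is finite and connected to its exterior. This is \eqref{eq:resolvent-mass-split}.

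The only step needing care is Step 2: the strong Markov property at the first jump, together with the symmetry that transfers the equation from the starting-point variable to the target variable. Since $D$ is finite and the chain cannot explode before leaving $D$, no analytic difficulty arises there.
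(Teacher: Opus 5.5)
Your proof is correct and follows the same route as the paper. In both arguments the $e^{-t}$-weighted killed occupation kernel $G(y,x)$, as a function of the starting point, is identified with the Dirichlet resolvent $(-\Delta+1)^{-1}$ on $D$; symmetry (equivalently, self-adjointness of that resolvent in $\ell^2(D,m)$) then moves the pole from the first to the second variable, and summing over $x\in D$ with Tonelli's theorem gives the mass identity. The only difference is that you prove the resolvent identification yourself, using a first-jump decomposition and uniqueness from positive definiteness of $L+m$, whereas the paper cites the standard fact that $\int_0^\infty e^{-t}P_t^D\,\dd t$ inverts $-\Delta+1$ with Dirichlet condition outside $D$.
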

	
	\ifprintstandardproofs
	\begin{proof}
		On the finite set $D$, set
		\[
		R_1^Df:=\int_0^\infty e^{-t}P_t^Df\,\dd t.
		\]
		This is the inverse of $-\Delta+1$ with Dirichlet boundary condition outside
		$D$, and it is self-adjoint in $\ell^2(D,m)$.  Equation
		\eqref{eq:finite-one-resolvent} gives
		$R_1^D\1_{\{o\}}=m(o)g_{1,D}$.  Hence, for $x\in D$, self-adjointness gives
		\[
		m(o)R_1^D\1_{\{x\}}(o)
		=m(x)R_1^D\1_{\{o\}}(x)
		=m(o)m(x)g_{1,D}(x).
		\]
		The probabilistic definition of $P_t^D$ therefore yields
		\eqref{eq:resolvent-occupation}.  Summing over $x$ and using Tonelli's theorem,
		\[
		\sum_{x\in D}g_{1,D}(x)m(x)
		=\E_o\int_0^{\tau_D}e^{-t}\,\dd t
		=1-\E_oe^{-\tau_D}.
		\]
	\end{proof}
	\fi
	
	The quantity $p_D$ is the Laplace transform of the exit time at parameter
	$1$.  It also equals the total current crossing the boundary.
	
	\begin{lemma}\label{lem:boundary-current}
		One has
		\begin{equation}\label{eq:boundary-current}
			\sum_{\substack{x\in D\\y\notin D}}b(x,y)g_{1,D}(x)=p_D.
		\end{equation}
	\end{lemma}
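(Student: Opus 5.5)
The plan is to sum the network equation \eqref{eq:network-one-resolvent} over $x\in D$ and combine the result with the mass identity \eqref{eq:resolvent-mass-split} from Lemma~\ref{lem:resolvent-occupation}. Write $g=g_{1,D}$, extended by zero outside $D$. Summing $Lg(x)+m(x)g(x)=\1_{\{o\}}(x)$ over $x\in D$ gives
\[
\sum_{x\in D}Lg(x)+\sum_{x\in D}m(x)g(x)=1 .
\]
All sums are finite, since $D$ is finite and the graph is locally finite.

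Next I evaluate $\sum_{x\in D}Lg(x)=\sum_{x\in D}\sum_{y\in V}b(x,y)(g(x)-g(y))$ by splitting the inner sum according to whether $y\in D$ or $y\notin D$.

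\begin{itemize}
\item The part with $y\in D$ is $\sum_{x,y\in D}b(x,y)(g(x)-g(y))$. It vanishes because $b$ is symmetric and the summand is antisymmetric in $(x,y)$.
\item The part with $y\notin D$ has $g(y)=0$, so it equals $\sum_{x\in D,\,y\notin D}b(x,y)g(x)$.
\end{itemize}

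Hence
\[
\sum_{\substack{x\in D\\y\notin D}}b(x,y)g(x)=1-\sum_{x\in D}g(x)m(x)=p_D ,
\]
where the last equality is \eqref{eq:resolvent-mass-split}.

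In the terminal-copy language this says $\lambda=\1_{\{o\}}-\Div i_g$ equals $m\,g$ on $D$. The total absorbed at the terminals is therefore $1-\sum_D mg=p_D$, which matches the notation $p$ of Lemma~\ref{lem:finite-domain-capacity}. I could record this as a remark.

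There is no serious obstacle. The only point needing care is that the cancellation uses finiteness of the double sum over $D\times D$, which holds here. If one wanted to avoid Lemma~\ref{lem:resolvent-occupation}, whose proof is suppressed, one would instead need the probabilistic identification of $\E_o e^{-\tau_D}$. I would simply cite the lemma.
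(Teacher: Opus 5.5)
Your proposal is correct and is essentially the paper's proof: sum the network equation over $D$, cancel the internal edges by antisymmetry, use the zero extension on boundary edges, and conclude with the mass identity $\sum_{x\in D}g_{1,D}(x)m(x)=1-p_D$. Your remark that $\lambda=m\,g_{1,D}$ on $D$ matches the identification the paper records right after the lemma.
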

	
	\begin{proof}
		Sum \eqref{eq:network-one-resolvent} over $x\in D$.  Contributions of internal
		edges cancel, and the zero extension of $g_{1,D}$ gives
		\[
		\sum_{\substack{x\in D\\y\notin D}}b(x,y)g_{1,D}(x)
		+\sum_{x\in D}m(x)g_{1,D}(x)=1.
		\]
		Use \eqref{eq:resolvent-mass-split}.
	\end{proof}
	
	\begin{remark}
		Green-operator conservation identities for Dirichlet restrictions and killing
		terms are studied in a more general setting in
		\cite{HakeKellerPogorzelskiSchmidt2026}.  Here we use only the elementary
		finite-domain identity above.  The flow decomposition applied below is a
		separate ingredient.
	\end{remark}
	
	For $w=g_{1,D}$, definition \eqref{eq:current-with-loss} gives
	\begin{equation*}
		\lambda(x)=m(x)g_{1,D}(x),\quad x\in D,
		\qquad
		\lambda(t_e)=b(x,y)g_{1,D}(x)
	\end{equation*}
	for a boundary edge $e=\{x,y\}$, $x\in D$, $y\notin D$.  This
	formulation also includes absorption at the source, since
	$\lambda(o)=m(o)g_{1,D}(o)$ is allowed to be positive.
	
	\subsection{Capacity to infinity}
	
	By completeness and Lemma~\ref{lem:balls-connected}, $B_R$ is finite and
	connected.  For $D=B_R$, write
	\begin{equation*}
		g_{1,R}:=g_{1,B_R},
		\qquad
		p_R:=\E_oe^{-\tau_{B_R}}.
	\end{equation*}
	By \eqref{eq:resolvent-mass-split},
	\begin{equation*}
		\sum_{x\in B_R}g_{1,R}(x)m(x)=1-p_R.
	\end{equation*}
	By Lemma~\ref{lem:boundary-current}, the boundary mass $p$ in
	Lemma~\ref{lem:finite-domain-capacity} equals $p_R$.  Since $B_R$ is a proper
	finite connected set and $g_{1,R}>0$ on $B_R$,
	\eqref{eq:boundary-current} gives $p_R>0$, while
	\eqref{eq:resolvent-mass-split} gives $p_R<1$.  Applying the lemma with
	$D=B_R$ therefore gives, for $R>1$,
	\begin{equation*}
		1-p_R
		\ge cp_R\int_1^R
		\frac{r\,\dd r}
		{\log\!\left(e+g_{1,R}(o)\capacity_{B_R}(B_r)/p_R\right)}.
	\end{equation*}
	Since
	\begin{equation*}
		g_{1,R}(o)\le\frac{1-p_R}{m(o)},
	\end{equation*}
	and $t\mapsto1/\log(e+t)$ is decreasing, we obtain
	\begin{equation}\label{eq:killed-capacity-normalized}
		1-p_R
		\ge cp_R\int_1^R
		\frac{r\,\dd r}
		{\log\!\left(
			e+\frac{1-p_R}{p_R}\frac{\capacity_{B_R}(B_r)}{m(o)}
			\right)}.
	\end{equation}
	
	We use the elementary fact that for every fixed $A>0$ there is $C_A$ such
	that
	\begin{equation}\label{eq:fixed-log-factor}
		\log(e+At)\le C_A\log(e+t),
		\qquad t\ge0.
	\end{equation}
	Hence a fixed multiplicative factor inside the logarithm does not affect the
	divergence of the integrals considered below.
	
	The following standard exhaustion fact is recorded in
	\cite[Lemma~2.32 and the proof of Theorem~2.31, pp.~174--176]
	{KellerLenzWojciechowski2021}.
	\begin{lemma}\label{lem:exit-times-lifetime}
		If $D_n$ is an increasing finite exhaustion of $V$, then
		\begin{equation*}
			\tau_{D_n}\uparrow\zeta
			\qquad\text{almost surely}.
		\end{equation*}
		Consequently,
		\begin{equation*}
			\E_xe^{-\tau_{D_n}}\downarrow\E_xe^{-\zeta}.
		\end{equation*}
		Here and below, $e^{-\infty}:=0$.
	\end{lemma}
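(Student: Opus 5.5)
The plan is to prove the two assertions of Lemma~\ref{lem:exit-times-lifetime} in order: first the almost-sure convergence $\tau_{D_n}\uparrow\zeta$, then the convergence of Laplace transforms, which will follow from the first by monotone or dominated convergence.

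\emph{Monotonicity and the upper bound.} Since $D_n\subseteq D_{n+1}$, leaving $D_{n+1}$ forces the process to have left $D_n$ first, so $\tau_{D_n}\le\tau_{D_{n+1}}$. Each $D_n$ is finite, hence $\tau_{D_n}\le\zeta$: before the lifetime the process cannot leave the finite set only by explosion, and after $\zeta$ it sits in the cemetery, outside $D_n$. Thus $\tau_\infty:=\lim_n\tau_{D_n}\le\zeta$ exists almost surely.

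\emph{The reverse inequality.} I will use the jump-chain construction of the minimal process: $\zeta=\sum_k S_k$, where $S_k$ are the holding times along the embedded chain $(Y_k)$. Suppose $\tau_\infty<\zeta$ on an event of positive probability. On that event the path on $[0,\tau_\infty]$ has only finitely many jumps, because for $t<\zeta$ the number of jumps up to time $t$ is finite. Hence it visits only a finite set $F\subset V$. Since the $D_n$ exhaust $V$, we have $F\subseteq D_n$ for large $n$, and then $\tau_{D_n}>\tau_\infty$. The strict inequality holds because the process stays in $F$ for a positive time after $\tau_\infty$ by right-continuity, or, if $\tau_\infty$ is itself a jump time, it lands in some vertex that is eventually in $D_n$. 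This contradicts $\tau_{D_n}\le\tau_\infty$, so $\tau_\infty=\zeta$ almost surely.

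\emph{Laplace transforms.} The variables $e^{-\tau_{D_n}}$ decrease to $e^{-\zeta}$ pointwise, with the convention $e^{-\infty}=0$. They are bounded by $1$, so dominated convergence gives $\E_xe^{-\tau_{D_n}}\downarrow\E_xe^{-\zeta}$.

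The step expecting the most care is the reverse inequality $\tau_\infty\ge\zeta$. The key point there is precisely that, before the lifetime, a path with finitely many jumps stays in a finite set, which is exactly what the minimal construction guarantees. Since the paper cites \cite[Lemma~2.32]{KellerLenzWojciechowski2021}, I would keep this argument brief.
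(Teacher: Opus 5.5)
Your proposal is correct and matches the paper's argument: $\tau_{D_n}\le\zeta$ holds trivially, and for $t<\zeta$ only finitely many jumps occur, so the finitely many visited vertices lie in some $D_n$; bounded convergence then handles the Laplace transforms. The paper applies this step directly to each $t<\zeta$ to get $\tau_{D_n}>t$ for large $n$, rather than arguing by contradiction at $t=\tau_\infty$, which spares you the extra right-continuity remark, but the content is the same.
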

	
	\ifprintstandardproofs
	\begin{proof}
		Clearly $\tau_{D_n}\le\zeta$.  Fix a sample path and $t<\zeta$.  Only
		finitely many jumps occur before $t$, so the path visits a finite set on
		$[0,t]$.  This set is contained in some $D_n$, and hence $\tau_{D_n}>t$.
		It follows that $\sup_n\tau_{D_n}=\zeta$.  The second assertion follows from
		bounded convergence.
	\end{proof}
	\fi
	
	\begin{proof}[Proof of Theorem~\ref{thm:intro-capacity-infinity}]
		The exit times increase with $R$, so $p_R$ decreases to a limit
		$p_\infty\ge0$.  Suppose that $p_\infty>0$.  Since
		$p_\infty\le p_R<1$, we have $0<p_\infty<1$.  Fix $L>1$ and let
		$R_j\uparrow\infty$, with $R_j>L$.  For every fixed $r\in[1,L]$,
		Lemma~\ref{lem:capacity-exhaustion} gives
		\[
		\frac{1-p_{R_j}}{p_{R_j}}\capacity_{B_{R_j}}(B_r)
		\longrightarrow
		\frac{1-p_\infty}{p_\infty}\capacity_V(B_r).
		\]
		Fatou's lemma and \eqref{eq:killed-capacity-normalized} yield
		\begin{align*}
			&\int_1^L
			\frac{r\,\dd r}
			{\log\!\left(
				e+\dfrac{1-p_\infty}{p_\infty}
				\dfrac{\capacity_V(B_r)}{m(o)}
				\right)}\\
			&\quad\le
			\liminf_{j\to\infty}\int_1^L
			\frac{r\,\dd r}
			{\log\!\left(
				e+\dfrac{1-p_{R_j}}{p_{R_j}}
				\dfrac{\capacity_{B_{R_j}}(B_r)}{m(o)}
				\right)}
			\le\frac{1-p_\infty}{cp_\infty},
		\end{align*}
		where the last inequality uses $L<R_j$.  Since $0<p_\infty<1$, condition
		\eqref{eq:intro-capacity-infinity-condition} and
		\eqref{eq:fixed-log-factor} make the left-hand side diverge as
		$L\to\infty$.  This contradiction shows that $p_\infty=0$.
		
		Lemma~\ref{lem:exit-times-lifetime} now gives $\E_oe^{-\zeta}=0$.  Since
		$e^{-\zeta}>0$ on $\{\zeta<\infty\}$, we have
		$\Pp_o(\zeta=\infty)=1$.  By the probabilistic characterization of stochastic
		completeness and the some/all base-point equivalence on connected graphs
		\cite[Theorem~7.2(i), pp.~310--312, and Theorem~7.32, pp.~352--354]
		{KellerLenzWojciechowski2021}, the graph is
		stochastically complete.
		\ifprintstandardproofs
		For a direct probabilistic verification of the base-point step, suppose that
		explosion had positive probability from a vertex $x$.  Starting from $o$, the
		process has positive probability of first following any fixed finite path from
		$o$ to $x$; after its arrival at $x$, the strong Markov property then gives
		positive probability of explosion.  This contradicts nonexplosion from $o$.
		\fi
	\end{proof}
	
	The exhaustion in this proof is essential: replacing $\capacity_{B_R}(B_r)$
	by the smaller $\capacity_V(B_r)$ directly in
	\eqref{eq:finite-domain-capacity} would enlarge its right-hand side.
	
	\begin{remark}\label{rem:killed-role}
		The finite killed $1$-resolvent is useful here for three related reasons.  Its
		gradient is a finite acyclic current to which flow decomposition applies.  Its
		total mass is $1-p_D$, while its boundary current is $p_D$, by
		\eqref{eq:resolvent-mass-split} and \eqref{eq:boundary-current}.  Finally,
		$p_D=\E_oe^{-\tau_D}$ converges along an exhaustion to the Laplace transform
		of the lifetime.  Thus the proof needs neither a whole-graph Green kernel nor
		pointwise convergence of $g_{1,D}$.
	\end{remark}
	
	\subsection{Relative capacity and volume}
	
	\begin{proof}[Proof of Theorem~\ref{thm:intro-capacity}]
		By \eqref{eq:intro-global-local-capacity}, condition
		\eqref{eq:intro-capacity-condition} implies
		\eqref{eq:intro-capacity-infinity-condition}.
		Theorem~\ref{thm:intro-capacity-infinity} applies.
	\end{proof}
	
	We now recover the known Grigor'yan type integral criterion.  The following
	short argument is the final step of the new proof by flow decomposition.
	
	\begin{proof}[Proof of Corollary~\ref{cor:intro-volume}]
		Lemma~\ref{lem:intrinsic-cutoff} gives
		\[
		\capacity_{B_{2r}}(B_r)\le\frac{m(B_{2r})}{r^2}.
		\]
		For $r\ge1$,
		\[
		\log\!\left(e+\frac{\capacity_{B_{2r}}(B_r)}{m(o)}\right)
		\le
		\log\!\left(e+\frac{m(B_{2r})}{m(o)}\right).
		\]
		After the substitution $s=2r$, condition
		\eqref{eq:intro-volume-condition} implies
		\eqref{eq:intro-capacity-condition}.  The conclusion follows from
		Theorem~\ref{thm:intro-capacity}.
	\end{proof}
	
	\begin{remark}\label{rem:basepoint-volume}
		Condition \eqref{eq:intro-volume-condition} is independent of the base point.
		Indeed, if $d_\rho(o,o')=a$, then for $r\ge a$,
		\[
		B_\rho(o',r)\subset B_\rho(o,r+a)\subset B_\rho(o,2r).
		\]
		A fixed change of scale and the fixed factor $m(o)/m(o')$ inside the logarithm
		do not affect divergence.  Interchanging $o$ and $o'$ gives the converse
		implication.
	\end{remark}
	
	\ifprintnashwilliams
	\subsection{A Nash--Williams consequence}
	
	For completeness, we record a standard Nash--Williams consequence; it is not
	needed in the preceding proofs.  For a finite set $K\subset V$, write
	\[
	\partial_E K:=\{\{x,y\}:x\in K,\ y\notin K,\ b(x,y)>0\},
	\qquad
	b(\partial_E K):=\sum_{\{x,y\}\in\partial_E K}b(x,y).
	\]
	Local finiteness makes $\partial_E K$ finite.
	
	\begin{lemma}\label{lem:nash-williams-capacity}
		Let $A\subset V$ be finite and nonempty.  If $(K_j)_{j\in J}$ is a nonempty
		finite or countable family of finite sets containing $A$, and the edge
		boundaries $\partial_E K_j$ are pairwise disjoint, then
		\begin{equation}\label{eq:nash-williams-capacity}
			\capacity_V(A)^{-1}
			\ge\sum_{j\in J}\frac1{b(\partial_E K_j)},
		\end{equation}
		with the usual extended-value convention.
	\end{lemma}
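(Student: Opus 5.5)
We prove the Nash--Williams inequality \eqref{eq:nash-williams-capacity} by a cut argument applied to a single test function. If $\capacity_V(A)=0$ the left side is $+\infty$ and nothing is to be shown, so assume $\capacity_V(A)>0$. Fix an admissible finitely supported $f$ with $f=1$ on $A$ and $0\le f\le1$, using the truncation remark after \eqref{eq:capacity-infinity-definition}. It suffices to show
\[
\cE(f)\ge\Bigl(\sum_{j\in J'}\frac1{b(\partial_EK_j)}\Bigr)^{-1}
\]
for every finite subfamily $J'\subseteq J$, and then pass to the infimum over $f$ and the supremum over $J'$. Finite subfamilies suffice because the right side of \eqref{eq:nash-williams-capacity} is a monotone limit of its finite partial sums.

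The key step is a per-cut lower bound. Fix $j$. Since $f$ is finitely supported and $V$ is infinite, there is $z\notin K_j$ with $f(z)=0$. Since $V$ is connected, choose a path from a vertex of $A$ (where $f=1$) to $z$. This path must cross $\partial_E K_j$, but a single crossing edge only gives a weak pointwise bound, so instead we use a flow-free argument. Define $\psi=f$ on $K_j$ and $\psi=f$ elsewhere, and observe that
\[
\sum_{\{x,y\}\in\partial_EK_j}b(x,y)\,|f(x)-f(y)|\ \ge\ 1-\max\Bigl(\text{values of }f\text{ outside }K_j\Bigr)\cdot(\dots)
\]
This direct approach is clumsy. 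The cleaner route is the coarea/level-set argument: for $t\in(0,1)$, the set $\{f>t\}$ is finite, contains $A$, and is missed by some vertex of $K_j^c$. That alone does not force an edge of $\partial_E K_j$ to be cut, so we switch to the standard dual approach via flows.

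The dual approach runs as follows. By Lemma~\ref{lem:capacity-exhaustion}, $\capacity_V(A)=\lim\capacity_{\Omega}(A)$ for finite $\Omega\supseteq\bigcup_{j\in J'}K_j$ with boundaries. We work in the finite network $\widehat\Omega$ and use \eqref{eq:ambient-terminal-capacity}. Let $I$ be the unit-strength flow from $A$ to $\cT_\Omega$ given by the normalized gradient of the equilibrium potential, so that $\cD(I)=1/\capacity_{\widehat\Omega}(A,\cT_\Omega)$, since equality holds in \eqref{eq:thomson-finite-network}. Each $\partial_EK_j$ separates $A\subseteq K_j$ from the terminals, which lie outside $K_j$. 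Summing $\Div I$ over $K_j$ then shows that the net flow across $\partial_EK_j$ equals $1$. By Cauchy--Schwarz,
\[
1\le\Bigl(\sum_{e\in\partial_EK_j}|I(e)|\Bigr)^2\le b(\partial_EK_j)\sum_{e\in\partial_EK_j}\frac{I(e)^2}{b_e}.
\]
Summing over $j\in J'$ and using disjointness of the cutsets gives $\cD(I)\ge\sum_{j\in J'}1/b(\partial_EK_j)$. Hence $\capacity_\Omega(A)^{-1}\ge\sum_{j\in J'}1/b(\partial_EK_j)$. Letting $\Omega\uparrow V$ and then $J'\uparrow J$ proves the claim.

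The main obstacle is the bookkeeping of cut edges in the terminal-copy network. Boundary edges of $K_j$ that are also boundary edges of $\Omega$ become copied edges to terminals. Edges of $\partial_EK_j$ with both endpoints outside $\Omega$ cannot occur once $\Omega\supseteq K_j$. We must check that the net flow out of $K_j$ is still computed by summing over these copies with the same weights. This holds because the terminal construction preserves the edge weights.
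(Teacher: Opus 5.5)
Your final flow argument is correct and is essentially the paper's proof. You take the unit current given by the normalized equilibrium potential for $\capacity_\Omega(A)$ on a finite $\Omega\supseteq\bigcup_{j\in J'}K_j$, sum $\Div I$ over $K_j$ to get unit flux through each $\partial_E K_j$, apply Cauchy--Schwarz on each cut, sum using disjointness, and pass to finite subfamilies. The paper differs only cosmetically: it works with the zero extension on $V$ rather than in $\widehat\Omega$, and it uses $\capacity_V(A)\le\capacity_\Omega(A)$ directly instead of letting $\Omega\uparrow V$. You should delete the abandoned test-function and coarea fragments (the ill-defined $\psi$, the incomplete display, and the unused reduction to $\cE(f)$), since the flow argument does not rely on them.
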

	
	\begin{proof}
		First suppose that the family is finite, and choose a finite connected set
		$D$ containing its union.  Let $h$ minimize $\capacity_D(A)$, extended by
		zero outside $D$.  Define the following antisymmetric current on oriented
		edges:
		\[
		I(x,y):=
		\frac{b(x,y)(h(x)-h(y))}{\capacity_D(A)}.
		\]
		The Euler--Lagrange equation gives $\Div I=0$ on $D\setminus A$.
		Summation by parts then gives
		\begin{equation*}
			\sum_{x\in A}\Div I(x)
			=\frac{\cE(h)}{\capacity_D(A)}=1,
			\qquad
			\sum_{\{x,y\}:b(x,y)>0}\frac{I(x,y)^2}{b(x,y)}
			=\frac{\cE(h)}{\capacity_D(A)^2}
			=\capacity_D(A)^{-1},
		\end{equation*}
		where each unoriented edge in the second sum is counted once.  Summing
		$\Div I$ over $K_j$ shows that the outward flux across $\partial_E K_j$ is
		one.  Hence
		Cauchy--Schwarz gives
		\[
		\sum_{\{x,y\}\in\partial_E K_j}
		\frac{I(x,y)^2}{b(x,y)}
		\ge\frac1{b(\partial_E K_j)}.
		\]
		The boundaries are pairwise disjoint, so summing these estimates and using
		$\capacity_V(A)\le\capacity_D(A)$ proves
		\eqref{eq:nash-williams-capacity} for a finite family.  Applying this result
		to finite subfamilies proves the countable case.  This is the standard
		Nash--Williams argument; see also \cite[p.~37]{LyonsPeres2016}.
	\end{proof}
	
	Thus any explicit family of pairwise edge-disjoint boundaries surrounding
	$B_r$ gives an upper bound for $\capacity_V(B_r)$ through
	\eqref{eq:nash-williams-capacity}.  If the integral obtained by substituting
	these upper bounds into \eqref{eq:intro-capacity-infinity-condition} diverges,
	then Theorem~\ref{thm:intro-capacity-infinity} yields stochastic completeness.
	\fi
	
	\section{Weighted half-line examples}\label{sec:half-line}
	
	The final section uses only weighted half-lines.  This setting retains the
	exact birth--death test needed for comparison while keeping the notation
	one-dimensional.  We first record the finite killed-resolvent identity and
	the exact test, and then give two examples.  The first separates the
	capacitary criteria from the volume criterion.  The second shows that a
	logarithmic loss can still remain even for capacity to infinity.
	
	Let $V=\mathbb N_0$, with an edge only between $n$ and $n+1$.  We use $0$ as
	the base point and write $B_r=B_\rho(0,r)$.  Set
	\begin{equation*}
		b_n:=b(n,n+1)>0,
		\qquad m_n:=m(n)>0,
		\qquad M_n:=\sum_{j=0}^nm_j.
	\end{equation*}
	For $D_N:=\{0,\ldots,N\}$, let $g_{1,D_N}$ solve
	\eqref{eq:network-one-resolvent} with pole $0$, and set
	\begin{equation*}
		h_n:=g_{1,D_N}(n),
		\qquad h_{N+1}:=0,
		\qquad p_N:=\E_0e^{-\tau_{D_N}}.
	\end{equation*}
	Define
	\begin{equation*}
		J_n:=b_n(h_n-h_{n+1}),
		\qquad 0\le n\le N,
		\qquad J_{-1}:=1.
	\end{equation*}
	The resolvent equation gives
	\begin{equation*}
		J_{n-1}-J_n=m_nh_n,
		\qquad 0\le n\le N.
	\end{equation*}
	In particular, $J_n$ decreases, and Lemma~\ref{lem:boundary-current} gives
	$J_N=p_N$.
	
	\begin{proposition}\label{prop:finite-half-line-identity}
		For every $N$,
		\begin{equation}\label{eq:finite-half-line-identity}
			1-p_N
			=\sum_{n=0}^Nm_nh_n
			=\sum_{n=0}^N\frac{M_nJ_n}{b_n}.
		\end{equation}
		Consequently,
		\begin{equation}\label{eq:finite-half-line-bound}
			p_N\le
			\left(1+\sum_{n=0}^N\frac{M_n}{b_n}\right)^{-1}.
		\end{equation}
	\end{proposition}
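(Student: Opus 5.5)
The first equality in \eqref{eq:finite-half-line-identity} is \eqref{eq:resolvent-mass-split} with $D=D_N$. For the second, we apply summation by parts to $\sum_n m_nh_n$, using the telescoping structure of $h$. Since $h_{N+1}=0$, we write
\[
h_n=\sum_{k=n}^{N}(h_k-h_{k+1})=\sum_{k=n}^N\frac{J_k}{b_k}.
\]
Substituting and interchanging the finite sums gives
\[
\sum_{n=0}^Nm_nh_n=\sum_{n=0}^N m_n\sum_{k=n}^N\frac{J_k}{b_k}
=\sum_{k=0}^N\frac{J_k}{b_k}\sum_{n=0}^k m_n=\sum_{k=0}^N\frac{M_kJ_k}{b_k}.
\]
This proves \eqref{eq:finite-half-line-identity}.

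For \eqref{eq:finite-half-line-bound}, we use the monotonicity of the current. The relation $J_{n-1}-J_n=m_nh_n\ge0$ holds because $h>0$ on $D_N$. Hence $J_n\ge J_N=p_N$ for all $0\le n\le N$, where the last equality is Lemma~\ref{lem:boundary-current}. Inserting this lower bound into the identity gives
\[
1-p_N\ge p_N\sum_{n=0}^N\frac{M_n}{b_n},
\]
and rearranging yields $p_N\bigl(1+\sum_{n\le N}M_n/b_n\bigr)\le1$.

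No step is a real obstacle. The only point needing care is the boundary bookkeeping. We must confirm that the edge $\{N,N+1\}$ is the sole boundary edge, so that $J_N=b_Nh_N$ coincides with the boundary current in \eqref{eq:boundary-current}. We must also check the case $n=0$, where $J_{-1}-J_0=1-b_0(h_0-h_1)=m_0h_0$ is exactly \eqref{eq:network-one-resolvent} at the pole. Both are direct.
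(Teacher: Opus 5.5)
Your proposal is correct and follows essentially the same route as the paper. Your interchange of the finite sums after writing $h_n=\sum_{k\ge n}J_k/b_k$ is exactly the Abel summation the paper uses. The bound then comes, as in the paper, from $J_n\ge J_N=p_N$ via Lemma~\ref{lem:boundary-current}.
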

	
	\begin{proof}
		Abel summation gives
		\begin{equation*}
			\sum_{n=0}^Nm_nh_n
			=\sum_{n=0}^NM_n(h_n-h_{n+1})
			=\sum_{n=0}^N\frac{M_nJ_n}{b_n}.
		\end{equation*}
		The first equality in \eqref{eq:finite-half-line-identity} is
		\eqref{eq:resolvent-mass-split}.  Since $J_n\ge J_N=p_N$,
		\begin{equation*}
			1-p_N\ge p_N\sum_{n=0}^N\frac{M_n}{b_n},
		\end{equation*}
		which is equivalent to \eqref{eq:finite-half-line-bound}.
	\end{proof}
	
	For the converse direction, the standard expected-hitting-time equation
	\cite[Section~3.3, Theorem~3.3.3, p.~113]{Norris1997}, with Norris's
	generator $Q$ equal to our $\Delta$, gives
	\begin{equation}\label{eq:half-line-mean-exit}
		\E_0\tau_{D_N}=\sum_{n=0}^N\frac{M_n}{b_n}.
	\end{equation}
	\ifprintstandardproofs
	Indeed, let $u_N(x)=\E_x\tau_{D_N}$, write $u_n:=u_N(n)$, and put
	$u_{N+1}=0$.  The cited expected-hitting-time equation says that $u_N$ is the
	Dirichlet solution of
	\begin{equation*}
		-\Delta u_N=1\quad\text{in }D_N,
		\qquad u_N=0\quad\text{on }V\setminus D_N.
	\end{equation*}
	Set
	\begin{equation*}
		K_n:=b_n(u_n-u_{n+1}),
		\qquad 0\le n\le N,
		\qquad K_{-1}:=0.
	\end{equation*}
	Then $K_n-K_{n-1}=m_n$, so $K_n=M_n$, and summing
	$u_n-u_{n+1}=M_n/b_n$ gives \eqref{eq:half-line-mean-exit}.
	\fi
	
	The preceding identities yield the standard birth--death criterion; see
	\cite[Theorem~9.25, pp.~406--408]{KellerLenzWojciechowski2021}.
	
	\begin{theorem}\label{thm:known-half-line}
		The weighted half-line is stochastically complete if and only if
		\begin{equation}\label{eq:exact-half-line-test}
			\sum_{n=0}^\infty\frac{M_n}{b_n}=\infty.
		\end{equation}
	\end{theorem}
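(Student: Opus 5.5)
The proof has two directions, and both reduce to the limit $p_\infty:=\lim_N p_N$, where $p_N=\E_0e^{-\tau_{D_N}}$. The sets $D_N=\{0,\ldots,N\}$ form an increasing finite exhaustion of $\mathbb N_0$, so Lemma~\ref{lem:exit-times-lifetime} gives $p_N\downarrow\E_0e^{-\zeta}$. Moreover, $\E_0e^{-\zeta}=0$ if and only if $\Pp_0(\zeta=\infty)=1$. By the base-point equivalence used at the end of the proof of Theorem~\ref{thm:intro-capacity-infinity}, this holds if and only if the half-line is stochastically complete. So it suffices to show that $p_\infty=0$ exactly when \eqref{eq:exact-half-line-test} holds.

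\emph{Sufficiency.} Assume \eqref{eq:exact-half-line-test}. Then \eqref{eq:finite-half-line-bound} in Proposition~\ref{prop:finite-half-line-identity} gives
\[
p_N\le\Bigl(1+\sum_{n=0}^N M_n/b_n\Bigr)^{-1}\to0,
\]
so $p_\infty=0$ and the graph is stochastically complete. This direction is immediate.

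\emph{Necessity.} Assume instead that $S:=\sum_{n\ge0}M_n/b_n<\infty$. Use \eqref{eq:half-line-mean-exit}: $\E_0\tau_{D_N}=\sum_{n=0}^N M_n/b_n\le S$. The exit times increase to $\zeta$ by Lemma~\ref{lem:exit-times-lifetime}, so monotone convergence gives $\E_0\zeta\le S<\infty$. In particular $\zeta<\infty$ almost surely, and the process explodes from $0$. This contradicts $\Pp_0(\zeta=\infty)=1$, so the graph is stochastically incomplete.

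Alternatively, Jensen's inequality gives $p_N=\E_0e^{-\tau_{D_N}}\ge e^{-\E_0\tau_{D_N}}\ge e^{-S}$, hence $p_\infty\ge e^{-S}>0$. This route avoids the passage to $\zeta$ and works directly with the quantity in the first paragraph.

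The only point needing care is the justification of \eqref{eq:half-line-mean-exit}, i.e.\ that $\E_x\tau_{D_N}$ is the finite Dirichlet solution of $-\Delta u=1$ on $D_N$. On the finite set $D_N$ this is standard (Norris), and the telescoping $b_n(u_n-u_{n+1})=M_n$ follows by summing the equation from $0$ to $n$. I do not expect any real obstacle; the main thing to check is the correct use of the exhaustion lemma and of the some/all base-point equivalence.
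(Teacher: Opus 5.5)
Your proposal is correct and matches the paper's proof: divergence of the series gives $p_N\to0$ via \eqref{eq:finite-half-line-bound} and the exhaustion lemma, and convergence gives $\E_0\zeta<\infty$ via \eqref{eq:half-line-mean-exit} and monotone convergence. Your Jensen variant, $p_N\ge e^{-\E_0\tau_{D_N}}\ge e^{-S}$, is also valid and states both directions in terms of $p_\infty$.
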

	
	\begin{proof}
		If the series diverges, \eqref{eq:finite-half-line-bound} gives $p_N\to0$.
		Together with Lemma~\ref{lem:exit-times-lifetime}, this implies
		$\E_0e^{-\zeta}=0$ and hence stochastic completeness by
		\cite[Theorem~7.2, pp.~310--312, and Theorem~7.32, pp.~352--354]
		{KellerLenzWojciechowski2021}.  If the series
		converges, \eqref{eq:half-line-mean-exit},
		Lemma~\ref{lem:exit-times-lifetime}, and monotone convergence give
		\begin{equation*}
			\E_0\zeta=\sum_{n=0}^\infty\frac{M_n}{b_n}<\infty.
		\end{equation*}
		Thus $\zeta<\infty$ almost surely from $0$, and the half-line is
		stochastically incomplete.
	\end{proof}
	
	We will also use the elementary series law for capacity.  If $0\le n\le k$,
	then
	\begin{equation}\label{eq:half-line-series-capacities}
		\begin{aligned}
			\capacity_{D_k}(D_n)^{-1}
			&=\sum_{j=n}^k\frac1{b_j},\\
			\capacity_V(D_n)^{-1}
			&=\sum_{j=n}^\infty\frac1{b_j},
		\end{aligned}
	\end{equation}
	with the extended-value convention.  The first equality is the series law
	for the edges from $\{n,n+1\}$ through $\{k,k+1\}$; the last edge is included
	because admissible functions vanish outside $D_k$.  The second equality
	follows from the first and Lemma~\ref{lem:capacity-exhaustion}.
	
	\subsection{Separating the capacity and volume criteria}
	
	The relative-capacity condition is strictly weaker than the volume condition,
	even for a weighted half-line with monotone edge weights and unbounded
	Laplacian.  Assign the edge $\{n,n+1\}$ the weight
	\[
	b_n:=b(n,n+1)=n+1.
	\]
	Thus the edge weights are strictly increasing.
	
	Set
	\[
	\mathcal J:=\{3,6,9,\ldots\},
	\]
	and define the vertex measure by
	\begin{equation*}
		m(n):=
		\begin{cases}
			1,&n\in\mathcal J,\\[1mm]
			\exp\!\bigl((n+1)^3\bigr),&n\notin\mathcal J.
		\end{cases}
	\end{equation*}
	For each $j\in\mathcal J$, give both edges adjacent to $j$ the length
	\[
	\ell_{j-1}=\ell_j:=\frac1{2\sqrt{j+1}},
	\]
	and give every other edge length one.  Since the pairs
	$\{j-1,j\}$, $j\in\mathcal J$, are disjoint, these lengths are
	well-defined.
	
	At an exceptional vertex $j\in\mathcal J$,
	\begin{align*}
		\sum_y b(j,y)\rho(j,y)^2
		&=
		j\ell_{j-1}^2+(j+1)\ell_j^2\\
		&=
		\frac{2j+1}{4(j+1)}
		<1=m(j).
	\end{align*}
	At a nonexceptional vertex $n$,
	\[
	\sum_y b(n,y)\rho(n,y)^2
	\le b_{n-1}+b_n
	=2n+1
	\le \exp\!\bigl((n+1)^3\bigr)
	=m(n).
	\]
	The corresponding estimate at the root is immediate.  Thus $\rho$ is
	$m$-adapted.
	
	Set
	\[
	r_0:=0,
	\qquad
	r_n:=\sum_{k=0}^{n-1}\ell_k,
	\qquad n\ge1.
	\]
	Among every three consecutive edges, at least one has length one.  Hence
	\begin{equation}\label{eq:half-line-radius-growth}
		r_n\asymp n.
	\end{equation}
	In particular, the intrinsic path metric is complete.
	
	The total jump rate at an exceptional vertex $j\in\mathcal J$ is
	\[
	\frac{b_{j-1}+b_j}{m(j)}
	=2j+1.
	\]
	The jump rates are therefore unbounded, and hence the Laplacian is an
	unbounded operator on $\ell^2(V,m)$.
	
	Let
	\[
	D_n:=\{0,1,\ldots,n\},
	\qquad
	M_n:=m(D_n).
	\]
	For all sufficiently large $n$,
	\[
	e^{n^3}\le M_n\le(n+1)e^{(n+1)^3}.
	\]
	Indeed, if $n\notin\mathcal J$, then
	$M_n\ge m(n)=e^{(n+1)^3}$, while if $n\in\mathcal J$, then
	$M_n\ge m(n-1)=e^{n^3}$.  Consequently,
	\begin{equation}\label{eq:half-line-volume-log}
		\log\!\left(e+\frac{M_n}{m(0)}\right)\asymp n^3.
	\end{equation}
	
	For $r_n\le r<r_{n+1}$, one has $B_\rho(0,r)=D_n$.  By
	\eqref{eq:half-line-radius-growth} and
	\eqref{eq:half-line-volume-log},
	\begin{align*}
		\int_{r_n}^{r_{n+1}}
		\frac{r\,\dd r}
		{\log\!\left(e+m(B_\rho(0,r))/m(0)\right)}
		&\le
		\frac{r_{n+1}\ell_n}
		{\log\!\left(e+M_n/m(0)\right)}\\
		&\lesssim\frac1{n^2}.
	\end{align*}
	It follows that
	\begin{equation}\label{eq:half-line-volume-condition-fails}
		\int_1^\infty
		\frac{r\,\dd r}
		{\log\!\left(e+m(B_\rho(0,r))/m(0)\right)}
		<\infty.
	\end{equation}
	Thus the volume condition fails.
	
	We next verify the relative-capacity condition.  Let
	$n\equiv1\pmod3$.  Then $\ell_n=1$.  For
	$r_n\le r<r_{n+1}$, one has $B_r=D_n$, and the function $\1_{D_n}$ is
	admissible for $\capacity_{B_{2r}}(B_r)$.  Since only the edge
	$\{n,n+1\}$ crosses the boundary of $D_n$,
	\[
	\capacity_{B_{2r}}(B_r)
	\le
	\cE(\1_{D_n})
	=
	b_n
	=
	n+1.
	\]
	Using \eqref{eq:half-line-radius-growth},
	\begin{align*}
		\int_{r_n}^{r_{n+1}}
		\frac{r\,\dd r}
		{\log\!\left(
			e+\capacity_{B_{2r}}(B_r)/m(0)
			\right)}
		&\ge
		\frac{r_n}
		{\log\!\left(e+(n+1)/m(0)\right)}\\
		&\gtrsim
		\frac{n}{\log n}.
	\end{align*}
	Summing over $n\equiv1\pmod3$ gives
	\begin{equation}\label{eq:half-line-capacity-condition-holds}
		\int_1^\infty
		\frac{r\,\dd r}
		{\log\!\left(
			e+\capacity_{B_{2r}}(B_r)/m(0)
			\right)}
		=\infty.
	\end{equation}
	Hence the relative-capacity criterion applies although the volume criterion
	does not.
	
	The capacity-to-infinity criterion obviously applies as well.  Indeed,
	\eqref{eq:half-line-series-capacities} and $b_j=j+1$ give
	$\capacity_V(D_n)=0$.  Every intrinsic ball is some $D_n$, so the denominator
	in \eqref{eq:intro-capacity-infinity-condition} is $\log e=1$ and that
	integral diverges.
	
	For completeness, the exact half-line criterion gives the same conclusion.
	Indeed, $b_n=n+1$, while $M_n\ge e^{n^3}$ for all sufficiently large $n$.
	Thus
	\[
	\sum_{n=0}^\infty\frac{M_n}{b_n}=\infty,
	\]
	and Theorem~\ref{thm:known-half-line} shows that the half-line is
	stochastically complete.
	
	\subsection{A remaining logarithmic loss}
	
	Fix $\gamma\in\mathbb R$ and define integers
	\begin{equation*}
		\beta_n
		:=\max\left\{
		2,\left\lceil(n+2)(\log(n+2))^\gamma\right\rceil
		\right\}.
	\end{equation*}
	Starting from $a_0:=1$, set
	\begin{equation*}
		a_{n+1}:=\beta_na_n,
		\qquad
		m(n):=a_n,
		\qquad
		b_n:=b(n,n+1):=a_{n+1}.
	\end{equation*}
	Since $\beta_n\ge2$,
	\begin{equation*}
		a_n\le M_n=\sum_{j=0}^na_j\le2a_n.
	\end{equation*}
	Consequently,
	\begin{equation*}
		\frac{M_n}{b_n}
		\asymp\frac1{\beta_n}
		\asymp\frac1{n(\log n)^\gamma}.
	\end{equation*}
	The exact half-line criterion \eqref{eq:exact-half-line-test} therefore gives
	\begin{equation*}
		\text{stochastic completeness}
		\quad\Longleftrightarrow\quad
		\gamma\le1.
	\end{equation*}
	
	Give the edge $\{n,n+1\}$ the length
	\begin{equation*}
		\ell_n:=\frac1{\sqrt{2\beta_n}}.
	\end{equation*}
	At the root,
	\begin{equation*}
		b_0\ell_0^2=\frac{a_0}{2}\le m(0),
	\end{equation*}
	while, for $n\ge1$,
	\begin{align*}
		b_{n-1}\ell_{n-1}^2+b_n\ell_n^2
		&=\frac{a_n}{2\beta_{n-1}}+\frac{a_{n+1}}{2\beta_n}\\
		&=\frac{a_n}{2\beta_{n-1}}+\frac{a_n}{2}
		\le a_n=m(n).
	\end{align*}
	Thus $\rho$ is $m$-adapted.  If
	\begin{equation*}
		r_0:=0,
		\qquad r_n:=\sum_{j=0}^{n-1}\ell_j,
	\end{equation*}
	then standard summation and integral comparison give
	\begin{equation}\label{eq:log-loss-asymptotics}
		r_n\asymp\sqrt n\,(\log n)^{-\gamma/2},
		\qquad
		\log M_n\asymp n\log n.
	\end{equation}
	In particular, $r_n\to\infty$, so the intrinsic path metric is complete.  The
	jump rate at $n\ge1$ is $1+\beta_n$, and hence the Laplacian is unbounded.
	
	For $r_n\le r<r_{n+1}$, one has $B_\rho(0,r)=D_n$.  It follows from
	\eqref{eq:log-loss-asymptotics} that
	\begin{align}
		\int_{r_n}^{r_{n+1}}
		\frac{r\,\dd r}{\log(e+M_n)}
		&=\frac{r_n\ell_n+\ell_n^2/2}{\log(e+M_n)} \notag\\
		&\asymp\frac1{n(\log n)^{\gamma+1}}.
		\label{eq:half-line-log-loss-increment}
	\end{align}
	Therefore the volume condition holds precisely when $\gamma\le0$.
	
	The relative-capacity condition has the same threshold.  For
	$r_n\le r<r_{n+1}$, let
	\begin{equation*}
		k=k(r):=\max\{j:r_j\le2r\},
	\end{equation*}
	so that $B_{2r}=D_k$.  The first identity in
	\eqref{eq:half-line-series-capacities} gives
	\begin{equation*}
		\capacity_{B_{2r}}(B_r)^{-1}
		=\sum_{j=n}^k\frac1{b_j}.
	\end{equation*}
	Since $b_{j+1}/b_j=\beta_{j+1}\ge2$,
	\begin{equation*}
		\frac{b_n}{2}
		\le\capacity_{B_{2r}}(B_r)
		\le b_n.
	\end{equation*}
	Together with $\log b_n\asymp n\log n$, this shows that the relative-capacity
	integral has the same increments as
	\eqref{eq:half-line-log-loss-increment}.  It therefore also detects precisely
	the range $\gamma\le0$.
	
	Capacity to infinity has the same order.  The second identity in
	\eqref{eq:half-line-series-capacities} gives
	\[
	\capacity_V(D_n)^{-1}=\sum_{j=n}^\infty\frac1{b_j}.
	\]
	Since $b_{j+1}/b_j\ge2$, one again has
	\[
	\frac{b_n}{2}\le\capacity_V(D_n)\le b_n.
	\]
	Thus the capacity-to-infinity criterion also has the threshold $\gamma\le0$.
	
	Thus the capacity-to-infinity, relative-capacity, and volume criteria all miss
	the complete range
	\begin{equation*}
		0<\gamma\le1.
	\end{equation*}
	The loss occurs before the intrinsic cutoff \eqref{eq:intro-cutoff}: it is
	already present in both logarithmic capacitary estimates.  In contrast, the
	exact birth--death test retains the individual terms $M_n/b_n$.
	
	\begin{remark}
		This half-line is the radial quotient of the rooted spherically symmetric tree
		in which every vertex at level $n$ has $\beta_n$ children, with unit edge
		weights and unit vertex measure.  Indeed, the $n$-th sphere has $a_n$ vertices,
		while the cut leading to the next sphere has total conductance $a_{n+1}$.
		This observation motivates the example but is not used in its proof.
	\end{remark}
	
	Together, the two half-line examples show that the logarithmic capacity
	conditions can hold when the volume condition fails, but neither is sharp on
	the second half-line.  The exact birth--death current identity retains more
	information than either capacitary estimate. The second example therefore points to the need for a further refinement if
	one seeks a criterion that is sharp on all half-line examples or equivalently, weakly spherically symmetric graphs.

\end{document}